\numberwithin{figure}{section}
  \theoremstyle{plain}
  \newtheorem{thm}{\protect\theoremname}[section]
  \theoremstyle{plain}
  \newtheorem{lem}{\protect\lemmaname}[section]
   \newenvironment{proof}[1][\proofname]{\par
     \normalfont\topsep6\p@\@plus6\p@\relax
     \trivlist
     \itemindent\parindent
     \item[\hskip\labelsep
           \scshape
       #1]\ignorespaces
   }{%
     \endtrivlist\@endpefalse
   }
   \providecommand{\proofname}{Proof}
  \theoremstyle{plain}
  \newtheorem{cor}{\protect\corollaryname}[section]
  \theoremstyle{plain}
  \newtheorem{prop}{\protect\propositionname}[section]
  \theoremstyle{remark}
  \newtheorem{rem}{\protect\remarkname}[section]
  \theoremstyle{definition}
  \newtheorem{defn}{\protect\definitionname}[section]
  \theoremstyle{plain}
  \newtheorem{fact}{\protect\factname}[section]
\providecommand{\corollaryname}{Corollary}
\providecommand{\definitionname}{Definition}
\providecommand{\factname}{Fact}
\providecommand{\lemmaname}{Lemma}
\providecommand{\propositionname}{Proposition}
\providecommand{\remarkname}{Remark}
\providecommand{\theoremname}{Theorem}
\begin{document}

\title{Some remarks on smooth renormings of Banach spaces}

\subjclass[2000]{46B03, 46B10.}

\author{Petr H\'ajek}
\address{Petr H\'ajek: Mathematical Institute\\Czech Academy of Science\\\v Zitn\'a 25 \\115 67 Praha 1\\
Czech Republic and Department of Mathematics\\Faculty of Electrical Engineering\\
Czech Technical University in Prague\\ Zikova 4, 160 00, Prague}
\email{hajek@math.cas.cz}

\author{Tommaso Russo}
\address{Tommaso Russo: Dipartimento di matematica\\ Universit\`a degli Studi di Milano\\
via Saldini 50, 20133 Milano, Italy}
\email{tommaso.russo@unimi.it}

\thanks{Research of the first author was supported in part by GA\v CR 16-07378S, RVO: 67985840.
Research of the second author was supported in part by the Universit\`a degli Studi di Milano (Italy) and in part by the Gruppo Nazionale
per l'Analisi Matematica, la Probabilit\`a e le loro Applicazioni (GNAMPA) of the Istituto Nazionale di Alta Matematica (INdAM) of Italy.}

\keywords{Fr\'{e}chet smooth, $C^{k}$-smooth norm, approximation of norms,
Minkowski functional, renorming, Implicit Function Theorem}

\subjclass[2010]{Primary 46B03; 46T20; Secondary 47J07; 14P20}

\date{\today}

\begin{abstract}
We prove that in every separable Banach space $X$ with a Schauder basis
and a $C^k$-smooth norm it is possible to approximate, uniformly on bounded sets,
every equivalent norm with a $C^k$-smooth one in a way that the approximation
is improving as fast as we wish on the elements depending only on the tail of the Schauder basis.

Our result solves a problem from the recent monograph of Guirao, Montesinos and Zizler.
\end{abstract}

\maketitle

\section{Introduction}

The problem of smooth approximation of continuous mappings  is one of the classical themes in analysis.
An important special case of this problem is the existence of
$C^k$-smooth  approximations of 
norms on an infinite-dimensional real Banach space. More precisely, assume that the real Banach space
$X$ admits a $C^k$-smooth norm. Let $\|\cdot\|$ be an equivalent norm
on $X$, $\varepsilon>0$. Does there exist a $C^k$-smooth renorming $\||\cdot\||$ of $X$ 
such that $1\le\frac{\||x\||}{\|x\|}\le1+\varepsilon$ holds for all $0\ne x\in X$?

In its full generality, this problem is still open, even in the case $k=1$ (no counterexample is known).
For $k=1$, the problem can be solved easily by using
Smulyan's criterion, once a dual LUR norm is present on $X^*$.
This covers a wide range of Banach spaces, in particular all WCG  (hence all separable, and all reflexive)
spaces \cite{dgz}. In the absence of a dual LUR renorming, the
problem appears to be completely open.

For $k\ge2$ the problem seems to be more difficult, and no dual approach
is available. 
To begin with, Deville \cite{deville-very-smooth} proved that the existence of $C^2$-smooth norm has profound structural
consequence for the space. In some sense, such spaces are either superreflexive, or
close to $c_0$. To get an idea of the difficulty of
constructing smooth norms, we refer to e.g. \cite{maalev-troyanski}, \cite{haydon-smooth}, 
\cite{haydon-trees}, \cite{haha}, \cite{bi}.

Broadly speaking, the construction of the smooth norm
is carried out by techniques locally using only finitely many ingredients.
Of course, this idea is present already in the 
concept of partitions of unity, but in the setting
of norms it is harder to implement as we need to preserve the
convexity of the involved functions.
Probably the first explicit use of this technique in order to construct smooth norms
is found in the work of Pechanec, Whitfield and Zizler \cite{pewhiziz}.
The authors construct
a particular LUR and $C^1$-smooth norm on $c_0(\Gamma)$ which admits $C^\infty$-approximations. 
This result has later been  
generalized to arbitrary WCG spaces \cite{hp}. Recently,
Bible and Smith \cite{bis} have succeeded  in solving the smooth
approximation problem for norms on $c_0(\Gamma)$, $k=\infty$. This is
essentially the only known nonseparable space where the
problem has been solved. 

In the separable setting, the problem has been completely
solved for every separable Banach space and every $k$, 
in a series of papers \cite{hajek-locally}, \cite{defoha-separ}, \cite{defoha-polyhedral},
and the final solution  in \cite{HaTa}.

We refer to the monographs \cite{dgz} and \cite{HJ book}
for a more complete discussion and references, too numerous to
be included in our note.

\medskip{}

The  main result of the present note delves deeper into the fine
behaviour of $C^k$-smooth approximations of norms in the separable setting.
It is in some sense analogous to the condition (ii) in Theorem VIII.3.2 in \cite{dgz},
which claims that in the Banach space with $C^k$-smooth partitions of unity,
the $C^k$-smooth approximations to continuous functions exist
with a prescribed precision around each point. 
Our result solves Problem 170 (stated somewhat imprecisely) in \cite{gmz}.
We also hope that the result may be of some use in the context of metric fixed 
point theory, where several notions are present of properties which asymptotically
improve with growing codimension. For example, let us mention the notion of  {\it asymptotically
non-expansive function} or the ones of {\it asymptotically isometric copy} of $\ell_1$ or $c_0$.

Let us now state our main result.

\begin{thm}
\label{thm:Ck norm improving}Let $(X,\|\cdot\|)$ be a separable real Banach space
with a Schauder basis $\left\{ e_{i}\right\} _{i\geq1}$ that
admits a $C^{k}$-smooth renorming. Then for every sequence
$\left\{ \varepsilon_{N}\right\} _{N\geq0}$ of positive numbers,
there is a $C^{k}$-smooth renorming $\left|\left|\left|\cdot\right|\right|\right|$
of $X$ such that for every $N\geq0$
\[
\Bigl|\,\left|\left|\left|x\right|\right|\right|-\left\Vert x\right\Vert \,\Bigr|\leq\varepsilon_{N}\left\Vert x\right\Vert \qquad\text{for }x\in X^{N},
\]
where \textup{$X^{N}:=\overline{\text{span}}\left\{ e_{i}\right\} _{i\geq N+1}$}.
\end{thm}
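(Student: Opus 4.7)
My plan is to leverage the separable smooth approximation theorem of H\'ajek and Talponen \cite{HaTa}---which yields $C^k$-smooth equivalent norms approximating any prescribed norm by any prescribed multiplicative margin---and to combine such approximations via the Schauder basis projections into a single global renorming. Let $P_N$ denote the canonical projection onto $\mathrm{span}\{e_1,\ldots,e_N\}$ and $Q_N := I - P_N$ the complementary projection onto $X^N$, and let $K$ be the basis constant, so that $\|P_N\|,\|Q_N\|\le K+1$.

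Since each $X^N$ is itself separable, has a Schauder basis, and inherits a $C^k$-smooth renorming, by \cite{HaTa} we can produce, for each $N\ge 0$ and each small $\delta_N>0$, a $C^k$-smooth norm $r_N$ on $X^N$ with $(1-\delta_N)\|x\|\le r_N(x)\le(1+\delta_N)\|x\|$. Extending each $r_N$ to $X$ as the $C^k$-smooth seminorm $\widetilde r_N(x):=r_N(Q_Nx)$, and choosing positive weights $\{\alpha_N\},\{\beta_N\}$ and a fixed $C^k$-smooth norm $\|\cdot\|_\flat$ on $X$, I would consider the candidate
\[
|||x|||^2 := \sum_{N=0}^{\infty}\alpha_N \widetilde r_N(x)^2 + \sum_{N=1}^{\infty}\beta_N \|P_N x\|_\flat^2,
\]
with weights tuned so that each summand is convex, positively $2$-homogeneous, $C^k$-smooth, and the whole series converges in $C^k$ uniformly on bounded sets. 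The second sum compensates for the kernels of the $Q_N$ and guarantees that the total defines a genuine norm rather than a seminorm. The square root $|||\cdot|||$ is then a $C^k$-smooth equivalent norm, the smoothness on $X\setminus\{0\}$ being a standard Implicit Function Theorem consequence of convexity and $C^k$-smoothness of the $2$-homogeneous defining function with nonvanishing gradient on its unit level set.

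For $x\in X^N$ the nested inclusion $X^N\subset X^M$ for $M\le N$ gives $Q_Mx=x$ and $P_Mx=0$, so the first $N+1$ terms of the first sum and the first $N$ of the second contribute $\bigl(\sum_{M\le N}\alpha_M\bigr) r_M(x)^2 \approx \bigl(\sum_{M\le N}\alpha_M\bigr)\|x\|^2$, while the tail terms are uniformly bounded by $O\bigl(\sum_{M>N}\alpha_M + \sum_{M>N}\beta_M\bigr)\|x\|^2$. The main technical obstacle is the simultaneous tuning of the parameter sequences $\{\delta_N\},\{\alpha_N\},\{\beta_N\}$ (together with the error for $\|\cdot\|_\flat$) to enforce, at once, (i) $C^k$-convergence of the series on bounded sets, (ii) equivalence of $|||\cdot|||$ with $\|\cdot\|$, and (iii) the $\varepsilon_N$-tail estimate on $X^N$ for every $N$. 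The delicate point is (iii): the accumulated error $\sum_{M\le N}\alpha_M\delta_M$ coming from the head summands cannot in general be made smaller than any prescribed $\varepsilon_N\to 0$ by a single static choice of $\delta_M$, so the construction must be refined---either by replacing each $\widetilde r_N$ with a seminorm whose restriction to $X^N$ is exactly (not merely $\delta_N$-approximately) aligned with $\|\cdot\|$, or by an inductive diagonal procedure that fixes $\delta_M$ and $\alpha_M$ only once enough of the tail $\{\varepsilon_M\}_{M>N}$ is known. This is where I expect the main creativity of the argument to reside.
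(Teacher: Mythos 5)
Your approach---a weighted sum of squares $|||x|||^2 := \sum_N \alpha_N \widetilde r_N(x)^2 + \sum_N \beta_N \|P_N x\|_\flat^2$---is genuinely different from what the paper does, but the obstacle you yourself flag as item (iii) is not a tuning difficulty; it is fatal to this additive structure. For $x\in X^N$ and $M\le N$, the head terms $\alpha_M \widetilde r_M(x)^2 = \alpha_M r_M(x)^2$ deviate from $\alpha_M\|x\|^2$ by roughly $\alpha_M\delta_M\|x\|^2$, and after dividing by the normalizing factor $\bigl(\sum_M\alpha_M\bigr)^{1/2}$ the relative error of $|||x|||$ against $\|x\|$ on $X^N$ is bounded below by something like $\sum_{M\le N}\alpha_M\delta_M/\sum_M\alpha_M$, which is an \emph{increasing} function of $N$, not a decreasing one. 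No diagonal rescheduling of when you fix $\delta_M,\alpha_M$ can change this, because the head terms persist in every partial sum. Nor can you make $r_M$ agree \emph{exactly} with $\|\cdot\|$ on all $X^N$ for $N>M$: that would force $r_M=\|\cdot\|$ on $X^{M+1}$, a codimension-one subspace, contradicting $C^k$-smoothness of $r_M$ whenever $\|\cdot\|$ is not already $C^k$-smooth there.

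The paper's key idea is precisely to circumvent this accumulation: instead of averaging, it arranges that for $x\in X^N$ the head quantities \emph{do not contribute at all}. This is done in two steps of a different nature from yours. First, a geometric renorming (Lemma \ref{lem:main lemma} and its corollary) produces norms $\|\cdot\|_n$ that are equal up to the exact factor $(1+\lambda_n\gamma_n)$ on $X^n$ but up to the larger factor $(1+\lambda_n)$ off it; this quantitative gap between behaviour on ``head'' and ``tail'' vectors is what a naive smooth approximation of $\|\cdot\|$ cannot supply. Second, after a careful rescaling to the norms $|||\cdot|||_n$, one glues not by summing squares but via the Minkowski functional of $\{\Phi\le 1\}$, where $\Phi(x)=\sum_n\varphi_n(|||x|||_{(s),n})$ and each $\varphi_n$ vanishes identically below a threshold $1-\delta_n$. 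Fact~\ref{Fact propr of |||.|||}(b) together with condition $(\dagger)$ then guarantees that for $x\in X^N$ on the relevant level set the terms $n<N$ fall strictly below their thresholds, so $\varphi_n(\cdot)=0$ and the head is completely suppressed, leaving only the tail norms $n\ge N$ which converge to $\|\cdot\|$ with the prescribed rate. The gluing-by-sublevel-set (rather than by $\ell^2$-sum) is essential because it behaves like a ``soft maximum'' and lets finitely many of the ingredients vanish outright, which is exactly the effect your additive construction cannot achieve.
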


In other words, we can approximate the original norm with a $C^{k}$-smooth
one in a way that on the "tail vectors" the approximation is improving
as fast as we wish.

The proof of Theorem \ref{thm:Ck norm improving} will be presented
in the next section. The
rough idea is the following.  By the result in \cite{HaTa}, for
every $N$ one can find a $C^{k}$-smooth norm $\left\Vert \cdot\right\Vert _{N}$
such that $\Bigl|\,\left\Vert \cdot\right\Vert _{N}-\left\Vert \cdot\right\Vert \,\Bigr|\leq\varepsilon_{N}\left\Vert \cdot\right\Vert $.
One is tempted to use the standard gluing  together in a $C^{k}$-smooth way
and hope that the resulting norm will be as desired.
Unfortunately, in this way there is no possibility to assure
that on $X^{N}$ only the $\left\Vert \cdot\right\Vert _{n}$ norms
with $n\geq N$ will enter into the gluing procedure. To achieve this feature
 it is necessary that the norms $\left\Vert \cdot\right\Vert _{N}$
be quantitatively different on $X^{N}$ and $X_{N}=\text{span}\left\{ e_{i}\right\} _{i=1}^{N}$.
The first part of the argument, consisting of the geometric Lemma
\ref{lem:main lemma} and some easy deductions, is exactly aimed at
finding new norms which are quantitatively different on tail vectors
and "head vectors''. The second step consists in iterating this
renorming for every $n$ and rescaling the norms. Finally, we suitably
approximate these norms with $C^{k}$-smooth ones and we glue everything
together using the standard technique.

\section{\label{sec:Proof of Th} Proof of the main result}

In this section we shall prove Theorem \ref{thm:Ck norm improving}.

Let $X$ be a separable (real) Banach space with norm $\left\Vert \cdot\right\Vert $
and  a Schauder basis $\left\{ e_{i}\right\} _{i\geq1}$.
We denote by $K:=\text{b.c}.\left\{ e_{i}\right\} _{i\geq1}$ the basis constant of the Schauder basis
(of course $K$ depends on the particular norm we are using). We also
let $P_{k}$ be the usual projection $P_{k}(\sum_{j\geq1}\alpha^{j}e_{j})=\sum_{j=1}^{k}\alpha^{j}e_{j}$
and $P^{k}:=I_{X}-P_{k}$, i.e. $P^{k}(\sum_{j\geq1}\alpha^{j}e_{j})=\sum_{j\geq k+1}\alpha^{j}e_{j}$.
It is clear that  $\left\Vert P_{k}\right\Vert \leq K$ and
$\left\Vert P^{k}\right\Vert \leq K+1$. Finally, we denote $X_{k}:=\text{span}\left\{ e_{i}\right\} _{i=1}^{k}$
and $X^{k}=\overline{\text{span}}\left\{ e_{i}\right\} _{i\geq k+1}$,
i.e. the ranges of the two projections respectively.

We will make extensive use of convex sets: let us recall that a convex
set $C$ in a Banach space $X$ is a \textit{convex body} if it has
nonempty interior. Obviously a symmetric convex body is in particular
a neighborhood of the origin and the unit ball $B_{X}$ of $X$ is
a bounded, symmetric convex body (we shorthand this fact by saying
that it is a BCSB). Any other BCSB $B$ in $X$ induces an equivalent
norm on $X$ via its Minkowski functional
\[
\mu_{B}(x):=\inf\left\{ t>0:x\in tB\right\} .
\]
We will also denote by $\left\Vert \cdot\right\Vert _{B}$ the norm
induced by $B$, i.e. $\left\Vert x\right\Vert _{B}:=\mu_{B}(x)$;
obviously $\left\Vert \cdot\right\Vert _{B_{X}}$ is the original
norm of the space. Moreover we clearly have
\[
B\subseteq C\implies\mu_{B}\geq\mu_{C},
\]
\[
\mu_{\lambda B}=\frac{1}{\lambda}\mu_{B}
\]
 and passing to the induced norms we see that
\[
B\subseteq C\subseteq(1+\delta)B\implies\frac{1}{1+\delta}\left\Vert \cdot\right\Vert _{B}\leq\left\Vert \cdot\right\Vert _{C}\leq\left\Vert \cdot\right\Vert _{B}.
\]

We now start with the first part of the argument.

\begin{lem}
\label{lem:main lemma}Let $(X,\left\Vert \cdot\right\Vert )$ be
a Banach space with
a Schauder basis $\left\{ e_{i}\right\} _{i\geq1}$ with basis constant
$K$. Denote the unit ball of $X$ by $B$, fix $k\in\mathbb{N}$, two parameters $\lambda>0$ and $0<R<1$,
and consider the sets
\[
D:=\left\{ x\in X:\left\Vert P^{k}x\right\Vert \leq R\right\} \cap(1+\lambda)\cdot B,
\]
\textup{
\[
C:=\overline{\text{conv}}\left\{ D,B\right\} .
\]
}Then $C$ is a BCSB and
\[
C\cap X^{k}\subseteq\left(1+\lambda\frac{K}{K+1-R}\right)\cdot B.
\]
\end{lem}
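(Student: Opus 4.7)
The plan is as follows. First, I would verify that $C$ is a BCSB: by construction it is closed and convex, and it is symmetric because both $D$ and $B$ are. It is a neighbourhood of the origin since $C \supseteq B$. Boundedness follows from the observation that $D \subseteq (1+\lambda)B$ by definition and $B \subseteq (1+\lambda)B$ since $\lambda > 0$; the set $(1+\lambda)B$ being closed and convex, passing to $\overline{\mathrm{conv}}$ preserves the inclusion, so $C \subseteq (1+\lambda)B$.

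For the non-trivial inclusion I would rely on the standard fact that, when $D$ and $B$ are both convex, $\mathrm{conv}(D \cup B) = \{\,td + (1-t)b : t \in [0,1],\ d \in D,\ b \in B\,\}$. Given $x = td + (1-t)b$ in this set, applying $P_k$ yields $t\,P_k d = P_k x - (1-t)\,P_k b$, whose norm is at most $\|P_k x\| + (1-t)K$ since $\|P_k b\| \leq K$. Combining this with the triangle inequality $\|d\| \leq \|P_k d\| + \|P^k d\|$ and the defining bound $\|P^k d\| \leq R$ gives
\[
t\|d\| \;\leq\; \|P_k x\| + (1-t)K + tR.
\]
Together with the trivial bound $\|d\| \leq 1+\lambda$ (from $D\subseteq(1+\lambda)B$), this yields
\[
\|x\| \;\leq\; \min\bigl\{\,t(1+\lambda),\ \|P_k x\| + (1-t)K + tR\,\bigr\} + (1-t).
\]
The right-hand side, maximised over $t \in [0,1]$, is attained where the two branches of the minimum coincide: a short one-variable calculation produces an upper bound of the form
\[
\|x\| \;\leq\; 1 + \frac{(K + \|P_k x\|)\,\lambda}{K+1+\lambda-R}.
\]

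To conclude, for any $x \in C \cap X^{k}$ I would pick a sequence $x_{n} \in \mathrm{conv}(D \cup B)$ with $x_{n} \to x$. Since $x \in X^{k}$ we have $P_k x_{n} \to P_k x = 0$ and $\|x_{n}\| \to \|x\|$, so passing to the limit in the previous inequality gives
\[
\|x\| \;\leq\; 1 + \frac{K\lambda}{K+1+\lambda-R} \;<\; 1 + \lambda\,\frac{K}{K+1-R},
\]
which is precisely the required inclusion. The only subtle point, in my view, is keeping the dependence on $\|P_k x\|$ \emph{explicit} throughout the main estimate rather than specialising to $P_k x = 0$ from the start: this is exactly what lets the passage from $\mathrm{conv}(D\cup B)$ to its closure $C$ go through without extra compactness arguments.
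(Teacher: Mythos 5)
Your proof is correct, and it ends up giving a slightly \emph{sharper} constant than the one in the statement, since
\[
1+\frac{K\lambda}{K+1+\lambda-R}\;<\;1+\lambda\,\frac{K}{K+1-R}.
\]
The core decomposition $x=td+(1-t)b$ is the same as in the paper, but the two arguments diverge from there in an instructive way. The paper specialises to $x\in X^{k}$ immediately, computes
\[
\|x\|=\|P^{k}x\|\le tR+(1-t)(K+1),
\]
and, under the harmless hypothesis $\|x\|\ge1$, deduces the single constraint $t\le\frac{K}{K+1-R}$, which it then plugs into the trivial bound $\|x\|\le1+t\lambda$; the passage to the closure is handled by a cone argument ($0\in\operatorname{Int}C$, so $tx\in\operatorname{conv}\{D,B\}$ for $t<1$ and one lets $t\to1$). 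You instead retain the variable $\|P_{k}x\|$ throughout, obtain \emph{two} competing upper bounds on $\|x\|$ as functions of $t$ (one increasing, one decreasing), and maximise their pointwise minimum; this balancing step is what buys you the $+\lambda$ in the denominator. Keeping $\|P_{k}x\|$ explicit also pays off at the closure step: because both $\|\cdot\|$ and $P_{k}$ are continuous, you can simply pass to the limit along a sequence in $\operatorname{conv}\{D,B\}$ converging to $x\in C\cap X^{k}$, avoiding the cone argument entirely. One small point worth making explicit (you gesture at it): the crossing point $t^{*}=\frac{\|P_{k}x\|+K}{K+1+\lambda-R}$ may lie outside $[0,1]$ when $\|P_{k}x\|$ is large, in which case the maximum of the minimum is attained at an endpoint and is strictly smaller than $1+t^{*}\lambda$, so the displayed bound remains valid (it is just no longer tight). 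As written, the argument is sound; if the stronger constant were needed downstream, it would propagate cleanly, but for the Lemma as stated either constant suffices.
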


Heuristically, if we modify the unit ball in the direction of $X_{k}$,
this modification results in a perturbation of the ball also in the
remaining directions, but this modification is significantly smaller.

\begin{proof}
The fact that $C$ is a BCSB is obvious. Let $x\in C\cap X^{k}$ and
notice that $0\in\text{Int}C$ (as $B\subseteq C$); by the cone argument
we deduce that $tx\in\text{Int}C$ for $t\in[0,1)$. Moreover $\text{conv}\left\{ D,B\right\} $
has non-empty interior, so it is easily seen that its interior equals
the interior of its closure, hence $tx\in\text{Int}C=\text{Int}\left(\text{conv}\left\{ D,B\right\} \right)\subseteq\text{conv}\left\{ D,B\right\} $.
If we can show that $tx\in\left(1+\lambda\frac{K}{K+1-R}\right)\cdot B$
we then let $t\rightarrow1$ and conclude the proof. In other words
we can assume without loss of generality that $x\in X^{k}\cap\text{conv}\left\{ D,B\right\} .$
Hence we can write $x=ty+(1-t)z$ with $t\in[0,1]$, $y\in D$ and
$z\in B$, in particular $\left\Vert P^{k}y\right\Vert \leq R$ and
$\left\Vert z\right\Vert \leq1$. Moreover $x\in X^{k}$ implies
\[
\left\Vert x\right\Vert =\left\Vert P^{k}x\right\Vert \leq t\left\Vert P^{k}y\right\Vert +(1-t)\left\Vert P^{k}z\right\Vert \leq tR+(1-t)(K+1);
\]
if $\left\Vert x\right\Vert \leq1$ the conclusion of the lemma is
clearly true, so we can assume $\left\Vert x\right\Vert \geq1$. Thus
we have $1\leq K+1-t(K+1-R)$ and this yields $t\leq\frac{K}{K+1-R}$.

Next, we move the points $y,z$ slightly, in such a way that $x$ is still a
convex combination of them: fix two parameters $\tau,\eta>0$ to be
chosen later and consider $u:=(1-\tau)y$ and $v:=(1+\eta)z$. Obviously
$x=\frac{t}{1-\tau}u+\frac{1-t}{1+\eta}v$ and we require this to
be a convex combination:
\[
1=\frac{t}{1-\tau}+\frac{1-t}{1+\eta}\qquad\implies\qquad\tau=\frac{(1-t)\eta}{t+\eta}\leq1
\]
(of course this choice implies $1-\tau\geq0$). Since $y\in D$, we
have $\left\Vert v\right\Vert \leq1+\eta$ and $\left\Vert u\right\Vert \leq(1-\tau)\left\Vert y\right\Vert \leq(1-\tau)(1+\lambda)$;
we want these norms to be both small, so we require (here we use the
previous choice of $\tau$)
\[
1+\eta=(1-\tau)(1+\lambda)\qquad\implies\qquad\eta=\lambda t.
\]
With this choice of $\tau$ and $\eta$ we have $\left\Vert u\right\Vert, \left\Vert v\right\Vert \leq1+\eta=1+\lambda t\leq1+\lambda\cdot\frac{K}{K+1-R}$;
by convexity the same holds true for $x$ and the proof is complete.
\end{proof}

We now modify again the obtained BCSB in such a way that on $X^{k}$
the body is an exact multiple of the original ball; this modification
does not destroy the properties achieved before. It will be useful
to denote by $S:=\left\{ x\in X:\left\Vert P^{k}x\right\Vert \leq R\right\} $;
with this notation we have $D:=S\cap(1+\lambda)\cdot B$.

\begin{cor}
In the above setting, let $\gamma:=\frac{K}{K+1-R}$ and \textup{
\[
\tilde{B}:=\overline{\text{conv}}\left\{ C,X^{k}\cap(1+\lambda\gamma)\cdot B\right\} .
\]
}Then $\tilde{B}$ is a BCSB and
\[
B\subseteq\tilde{B}\subseteq(1+\lambda)\cdot B,
\]
\[
S\cap\tilde{B}=S\cap(1+\lambda)\cdot B,
\]
\[
X^{k}\cap\tilde{B}=X^{k}\cap(1+\lambda\gamma)\cdot B.
\]
\end{cor}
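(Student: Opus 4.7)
The plan is to verify the four claims in the order stated. The first three are routine bookkeeping once one observes that $\gamma<1$, a direct consequence of $R<1$; it is the fourth which carries all the content and is where Lemma~\ref{lem:main lemma} is put to use.

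First I would note that $X^{k}\cap(1+\lambda\gamma)\cdot B\subseteq(1+\lambda)\cdot B$, and combine this with $C\subseteq(1+\lambda)\cdot B$ to obtain $\tilde{B}\subseteq(1+\lambda)\cdot B$. The chain $B\subseteq C\subseteq\tilde{B}$ is immediate and simultaneously gives the lower containment, shows that $\tilde{B}$ has nonempty interior (completing the BCSB assertion, since symmetry and closedness are built into the definition), and yields the $\supseteq$ half of the $S$-identity via $D=S\cap(1+\lambda)\cdot B\subseteq C\subseteq\tilde{B}$; the reverse inclusion follows from $\tilde{B}\subseteq(1+\lambda)\cdot B$. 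Likewise the $\supseteq$ half of the $X^{k}$-identity is trivial, since $X^{k}\cap(1+\lambda\gamma)\cdot B$ is one of the two sets whose closed convex hull defines $\tilde{B}$.

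The one genuine step is the inclusion $X^{k}\cap\tilde{B}\subseteq(1+\lambda\gamma)\cdot B$, and here I would work with Minkowski functionals. Set $Y:=X^{k}\cap(1+\lambda\gamma)\cdot B$ and extend $\mu_{Y}$ to all of $X$ by declaring $\mu_{Y}\equiv+\infty$ off $X^{k}$; on $X^{k}$ one has $\mu_{Y}(x)=\|x\|/(1+\lambda\gamma)$. Lemma~\ref{lem:main lemma} rephrased with Minkowski functionals yields $\mu_{C}(x)\geq\|x\|/(1+\lambda\gamma)$ for every $x\in X^{k}$. Since $0$ lies in the interior of $\text{conv}\{C,Y\}$ (because $B\subseteq C$), the Minkowski functional of the closure coincides with that of $\text{conv}\{C,Y\}$, which admits the standard representation
\[
\mu_{\tilde{B}}(x)=\inf\bigl\{\mu_{C}(u_{1})+\mu_{Y}(u_{2})\,:\,x=u_{1}+u_{2}\bigr\}.
\]
For $x\in X^{k}$ any decomposition giving a finite value forces $u_{2}\in X^{k}$ and hence $u_{1}\in X^{k}$ as well; the triangle inequality then produces $\mu_{C}(u_{1})+\mu_{Y}(u_{2})\geq\|x\|/(1+\lambda\gamma)$, and taking the infimum gives $\mu_{\tilde{B}}(x)\geq\|x\|/(1+\lambda\gamma)$ on $X^{k}$, which is exactly the desired inclusion.

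The main obstacle I foresee is the careful setup of this final step: recognising that the problem reduces to the Minkowski functional of a convex hull, adopting the $+\infty$ convention for $\mu_{Y}$ outside $X^{k}$, and invoking interiority of $0$ to pass freely between $\text{conv}$ and $\overline{\text{conv}}$. Once these ingredients are in place the estimate above is only a few lines.
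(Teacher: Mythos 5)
Your argument is correct. The three easy assertions are handled exactly as in the paper. For the substantive inclusion $X^{k}\cap\tilde{B}\subseteq(1+\lambda\gamma)\cdot B$, however, your route is genuinely different in technique: you invoke the infimal-convolution representation $\mu_{\tilde{B}}(x)=\inf\{\mu_{C}(u_{1})+\mu_{Y}(u_{2}):x=u_{1}+u_{2}\}$ (valid because $0\in C\cap Y$ and $C$, $Y$ are bounded), use the $+\infty$ convention to force $u_{2}\in X^{k}$, translate Lemma~\ref{lem:main lemma} into the gauge estimate $\mu_{C}\geq\|\cdot\|/(1+\lambda\gamma)$ on $X^{k}$, and close with the triangle inequality. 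The paper instead writes a point of the (non-closed) convex hull as a single convex combination $p=ty+(1-t)z$, argues $y\in X^{k}$ whenever $t>0$, and applies the Lemma directly; passage to the closure is handled by referring back to the cone argument at the start of the Lemma's proof. Both arguments have the same core content (forcing the $C$-piece into $X^{k}$ and then invoking the Lemma), but your Minkowski-functional reformulation avoids the separate $t=0$/$t>0$ case split and replaces the cone/closure discussion with the standard fact that $\mu_{A}=\mu_{\overline{A}}$ when $0\in\operatorname{Int}A$ --- a slightly smoother bookkeeping, at the cost of invoking the infimal-convolution formula rather than elementary convex-combination manipulations. One small point worth flagging: the formula $\mu_{\tilde{B}}=\mu_{C}\,\square\,\mu_{Y}$ relies on both $C$ and $Y$ being bounded (so that vanishing of either gauge forces the corresponding summand to be $0$); this holds here since both lie inside $(1+\lambda)\cdot B$, but it should be said.
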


\begin{proof}
It is obvious that $\tilde{B}$ is a BCSB. Of course $B\subseteq C$,
so $B\subseteq\tilde{B}$ too; also $D\subseteq(1+\lambda)\cdot B$
implies $C\subseteq(1+\lambda)\cdot B$. Since $\gamma\leq1$ we deduce
that $\tilde{B}\subseteq(1+\lambda)\cdot B$.

The $\subseteq$ in the second assertion follows from what we have
just proved; for the converse inclusion, just observe that $S\cap(1+\lambda)\cdot B=D\subseteq\tilde{B}$.

For the last equality, obviously $X^{k}\cap(1+\lambda\gamma)\cdot B\subseteq\tilde{B}$,
so the $\supseteq$ inclusion follows. For the converse inclusion,
let $p\in X^{k}\cap\tilde{B}$; exactly the same argument as in the
first part of the previous proof (with $C$ replaced by $\tilde{B}$)
shows that we can assume $p\in\text{conv}\left\{ C,X^{k}\cap(1+\lambda\gamma)\cdot B\right\} \cap X^{k}$.
So we can write $p=ty+(1-t)z$ with $y\in C$ and $z\in X^{k}\cap(1+\lambda\gamma)\cdot B$.
If $t=0$, $p=z\in X^{k}\cap(1+\lambda\gamma)\cdot B$ and we are
done. On the other hand if $t>0$, from $p\in X^{k}$ we deduce that
$y\in X^{k}$ too; hence in fact $y\in C\cap X^{k}\subseteq(1+\lambda\gamma)\cdot B$,
by the previous lemma. By convexity $p\in(1+\lambda\gamma)\cdot B$
and the proof is complete.
\end{proof}

The next proposition is essentially a restatement of the above corollary
in terms of norms rather than convex bodies; we write it explicitly
since in what follows we will use the approach using norms. The
general setting is the one above: we have a separable Banach space
$X$ with a Schauder basis $\left\{ e_{i}\right\} _{i\geq1}$ and
we denote by $X_{k}:=\text{span}\left\{ e_{i}\right\} _{i=1}^{k}$
and $X^{k}=\overline{\text{span}}\left\{ e_{i}\right\} _{i\geq k+1}$.

\begin{prop}
Let $B$ be a BCSB in $X$ and let $\left\Vert \cdot\right\Vert _{B}$
be the induced norm; also let $K$ be the basis constant of $\left\{ e_{i}\right\} _{i\geq1}$
relative to $\left\Vert \cdot\right\Vert _{B}$. Fix $k\in\mathbb{N}$
and two parameters $\lambda>0$ and $0<R<1$. Then there is a BCSB
$\tilde{B}$ in $X$ such that the induced norm $\left\Vert \cdot\right\Vert _{\tilde{B}}$
satisfies the following properties:
\begin{description}
\item [{\textmd{(a)}}]
\[
\left\Vert \cdot\right\Vert _{\tilde{B}}\leq\left\Vert \cdot\right\Vert _{B}\leq(1+\lambda)\left\Vert \cdot\right\Vert _{\tilde{B}},
\]

\item [{\textmd{(b)}}]
\[
\left\Vert \cdot\right\Vert _{B}=(1+\lambda\gamma)\left\Vert \cdot\right\Vert _{\tilde{B}}\qquad\text{on }X^{k},
\]

\item [{\textmd{(c)}}]
\[
\left\Vert x\right\Vert _{B}=(1+\lambda)\left\Vert x\right\Vert _{\tilde{B}}\qquad\text{whenever }\left\Vert P^{k}x\right\Vert \leq\frac{R}{1+\lambda}\left\Vert x\right\Vert ,
\]
\end{description}
where $\gamma:=\frac{K}{K+1-R}$.
\end{prop}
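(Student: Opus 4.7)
My plan is to take $\tilde B$ to be precisely the BCSB produced by the preceding corollary and then read off each of the three conclusions from the corresponding set-theoretic identity, treating the proposition as a dictionary translation of the corollary into the language of Minkowski functionals.

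For (a), the inclusions $B\subseteq\tilde B\subseteq(1+\lambda)\cdot B$ furnished by the corollary yield $\tfrac{1}{1+\lambda}\|\cdot\|_B\le\|\cdot\|_{\tilde B}\le\|\cdot\|_B$ by the general implication about Minkowski functionals recorded in the preliminaries, which is exactly (a) after rearranging. For (b), the identity $X^k\cap\tilde B=X^k\cap(1+\lambda\gamma)\cdot B$ says that $\tilde B$ and $(1+\lambda\gamma)\cdot B$ have the same radial cross-section with $X^k$; since a symmetric convex body is determined along any ray by that cross-section, this forces $\|x\|_{\tilde B}=\tfrac{1}{1+\lambda\gamma}\|x\|_B$ for every $x\in X^k$.

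The only step that requires a little work is (c). Given $x\ne 0$ satisfying $\|P^k x\|_B\le\tfrac{R}{1+\lambda}\|x\|_B$, I would renormalize $x$ onto the $\|\cdot\|_B$-sphere of radius $1+\lambda$ by setting $y:=\tfrac{1+\lambda}{\|x\|_B}\,x$. Then $\|y\|_B=1+\lambda$, so $y\in(1+\lambda)\cdot B$, and the linearity of $P^k$ together with the hypothesis yields $\|P^k y\|_B\le R$, i.e.\ $y\in S$. Hence $y\in S\cap(1+\lambda)\cdot B=D\subseteq\tilde B$, so $\|y\|_{\tilde B}\le 1$; unwinding the definition of $y$ this gives $(1+\lambda)\|x\|_{\tilde B}\le\|x\|_B$, which paired with the reverse inequality from (a) yields equality.

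No step here poses a real obstacle, since the geometric content has already been concentrated in Lemma~\ref{lem:main lemma} and its corollary. The only thing requiring a touch of care is the homogeneity computation in (c): the scale-invariant hypothesis $\|P^k x\|_B\le\tfrac{R}{1+\lambda}\|x\|_B$ is chosen precisely so that the rescaled vector $y$ lands in the ``thin slab'' $D$ on which $\tilde B$ coincides with $(1+\lambda)\cdot B$, and this is exactly where condition (c) acquires the constant $1+\lambda$ rather than $1+\lambda\gamma$.
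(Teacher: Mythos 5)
Your proposal is correct, and for parts (a) and (b) it proceeds essentially as the paper does, by reading off the Minkowski-functional consequences of the set inclusions $B\subseteq\tilde{B}\subseteq(1+\lambda)B$ and the slice identity $X^{k}\cap\tilde{B}=X^{k}\cap(1+\lambda\gamma)\cdot B$ (the latter is legitimate precisely because $X^{k}$ is a subspace, hence dilation-invariant, so the Minkowski functional on $X^{k}$ is determined by the slice). The interesting divergence is in (c). The paper confronts head-on the difficulty that $S=\{\|P^{k}x\|\leq R\}$ is not a cone: it replaces $S$ by the conical set $S_{1}=\{\|P^{k}x\|\leq\tfrac{R}{1+\lambda}\|x\|\}$, shows that this substitution does not alter the construction (via the auxiliary argument that $D\subseteq\text{conv}\{D_{1},B\}$, using the points $x_{t}=P^{k}x+t(x-P^{k}x)$), establishes $S_{1}\cap\tilde{B}=S_{1}\cap(1+\lambda)\cdot B$, and only then translates to gauges. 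You avoid all of this with a pointwise argument: given $x$ in the cone, the single rescaled vector $y=\tfrac{1+\lambda}{\|x\|_{B}}x$ sits exactly on the sphere $\|\cdot\|_{B}=1+\lambda$, the homogeneity of the cone condition forces $\|P^{k}y\|_{B}\leq R$ so that $y\in D$, the trivial inclusion $D\subseteq C\subseteq\tilde{B}$ gives $(1+\lambda)\|x\|_{\tilde{B}}\leq\|x\|_{B}$, and (a) supplies the reverse inequality. This bypasses the $S_{1}$ detour entirely, uses only one trivial inclusion rather than the full set-theoretic identity from the Corollary, and is a good illustration that one can often prove a gauge identity by testing a single well-chosen scale rather than establishing a set equality for all scales. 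Both arguments are sound; yours is shorter and arguably more transparent.
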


\begin{proof}
We let $\tilde{B}$ be the convex body defined in the corollary. Then
(a) follows immediately from the corollary and (b) is immediate too:
for $x\in X^{k}$
\[
\left\Vert x\right\Vert _{\tilde{B}}=\inf\left\{ t>0:x\in t\cdot\tilde{B}\right\} =\inf\left\{ t>0:x\in t\cdot\left(\tilde{B}\cap X^{k}\right)\right\}=
\]
\[
\inf\left\{ t>0:x\in t\cdot\left(X^{k}\cap\left(1+\lambda\gamma\right)\cdot B\right)\right\} =\inf\left\{ t>0:x\in t\left(1+\lambda\gamma\right)\cdot B\right\}
\]
\[
=\frac{1}{1+\lambda\gamma}\inf\left\{ t>0:x\in t\cdot B\right\} =\frac{1}{1+\lambda\gamma}\left\Vert x\right\Vert _{B}.
\]

The last equality is not completely trivial since $S$ is not a cone,
so we first modify it and we define
\[
S_{1}:=\left\{ x\in X:\left\Vert P^{k}x\right\Vert \leq\frac{R}{1+\lambda}\left\Vert x\right\Vert \right\} .
\]
We observe that replacing $S$ with $S_{1}$ does not modify the construction:
if we set $D_{1}:=S_{1}\cap(1+\lambda)\cdot B$, then we have $C_{1}:=\overline{\text{conv}}\left\{ D_{1},B\right\} =C$.
In fact $S_{1}\cap(1+\lambda)\cdot B\subseteq S$ implies $C_{1}\subseteq C$
and the converse inclusion follows from $D\subseteq\text{conv}\left\{ D_{1},B\right\} $.
In order to prove this, fix $x\in D$; then $\left\Vert P^{k}x\right\Vert \leq R<1$
and in particular $P^{k}x\in B$. Now set $x_{t}:=P^{k}x+t(x-P^{k}x)$
and choose $t\geq1$ such that $\left\Vert x_{t}\right\Vert =1+\lambda$;
with this choice of $t$ we get $\left\Vert P^{k}x_{t}\right\Vert =\left\Vert P^{k}x\right\Vert \leq R=\frac{R}{1+\lambda}\left\Vert x_{t}\right\Vert $,
so $x_{t}\in D_{1}$. Since $x$ is a convex combination of $x_{t}$
and $P^{k}x$ we deduce $D\subseteq\text{conv}\left\{ D_{1},B\right\} $.

Next, we claim that
\[
S_{1}\cap\tilde{B}=S_{1}\cap(1+\lambda)\cdot B.
\]
In fact $\supseteq$ follows from the analogous relation with $S$,
proved in the corollary, and $S_{1}\cap(1+\lambda)\cdot B\subseteq S$.
The converse inclusion follows from the usual $\tilde{B}\subseteq(1+\lambda)\cdot B$.

Finally we prove (c): pick $x\in S_{1}$ and notice that
\[
\left\{ t>0:x\in t\tilde{B}\right\} =\left\{ t>0:x\in t\tilde{B}\cap S_{1}\right\} =\left\{ t>0:x\in t\left(\tilde{B}\cap S_{1}\right)\right\}
\]
\[
=\left\{ t>0:x\in t\left(S_{1}\cap\left(1+\lambda\right)\cdot B\right)\right\} =\left\{ t>0:x\in t\left(1+\lambda\right)\cdot B\right\} ;
\]
hence
\[
\inf\left\{ t>0:x\in t\tilde{B}\right\} =\frac{1}{1+\lambda}\inf\left\{ t>0:x\in t\cdot B\right\} ,
\]
 which is exactly (c).
\end{proof}

We now iterate the above renorming procedure: we start with the Banach
space $X$ with unit ball $B$ and corresponding norm $\left\Vert \cdot\right\Vert :=\left\Vert \cdot\right\Vert _{B}$
and we apply the proposition with $k=1$, a certain $\lambda_{1}>0$
and $R=1/2$. We let $B_{1}:=\widetilde{B}$ be the obtained body
and $\left\Vert \cdot\right\Vert _{1}:=\left\Vert \cdot\right\Vert _{B_{1}}$
be the corresponding norm. Then we have

\[
\left\Vert \cdot\right\Vert _{1}\leq\left\Vert \cdot\right\Vert \leq(1+\lambda_{1})\left\Vert \cdot\right\Vert _{1},
\]

\[
\left\Vert \cdot\right\Vert =(1+\lambda_{1}\gamma_{1})\left\Vert \cdot\right\Vert _{1}\qquad\text{on }X^{1},
\]

\[
\left\Vert x\right\Vert =(1+\lambda_{1})\left\Vert x\right\Vert _{1}\qquad\text{whenever }\left\Vert P^{1}x\right\Vert \leq\frac{1/2}{1+\lambda_{1}}\left\Vert x\right\Vert ,
\]

where $\gamma_{1}:=\frac{K}{K+1/2}$.

We proceed inductively in the obvious way: fix a sequence $\left\{ \lambda_{n}\right\} _{n\geq1}\subseteq(0,\infty)$
such that $\prod_{i=1}^{\infty}(1+\lambda_{i})<\infty$ and, in order
to have a more concise notation, denote by $\left\Vert \cdot\right\Vert _{0}:=\left\Vert \cdot\right\Vert $
the original norm of $X$ and by $K_{0}:=K$. Apply inductively the
previous proposition: at the step $n$ we use the proposition with
$\lambda=\lambda_{n}$, $R=1/2$, $k=n$ and $B=B_{n-1}$ and we set
$B_{n}:=\widetilde{B_{n-1}}$ and $\left\Vert \cdot\right\Vert _{n}:=\left\Vert \cdot\right\Vert _{B_{n}}$.
This gives a sequence of norms $\left\{ \left\Vert \cdot\right\Vert _{n}\right\} _{n\geq0}$
on $X$ such that for every $n\geq1$ we have:

\begin{equation}
\left\Vert \cdot\right\Vert _{n}\leq\left\Vert \cdot\right\Vert _{n-1}\leq(1+\lambda_{n})\left\Vert \cdot\right\Vert _{n},\label{eq: equiv norm}
\end{equation}

\begin{equation}
\left\Vert \cdot\right\Vert _{n-1}=(1+\lambda_{n}\gamma_{n})\left\Vert \cdot\right\Vert _{n}\qquad\text{on }X^{n},\label{eq: rel on tails}
\end{equation}

\begin{equation}
\left\Vert x\right\Vert _{n-1}=(1+\lambda_{n})\left\Vert x\right\Vert _{n}\qquad\text{whenever }\left\Vert P^{n}x\right\Vert _{n-1}\leq\frac{1/2}{1+\lambda_{n}}\left\Vert x\right\Vert _{n-1},\label{eq: rel with projection}
\end{equation}

where $K_{n}$ denotes the basis constant of $\left\{ e_{i}\right\} _{i\geq1}$
relative to $\left\Vert \cdot\right\Vert _{n}$ and $\gamma_{n}:=\frac{K_{n-1}}{K_{n-1}+1/2}\in(0,1)$.

\begin{rem}
The condition $\left\Vert P^{n}x\right\Vert _{n-1}\leq\frac{1/2}{1+\lambda_{n}}\left\Vert x\right\Vert _{n-1}$
appearing in (\ref{eq: rel with projection}) is somewhat unpleasing
since the involved norms change with $n$; we thus replace it with
the following more uniform, but weaker, condition.

\begin{equation}
\left\Vert x\right\Vert _{n-1}=(1+\lambda_{n})\left\Vert x\right\Vert _{n}\qquad\text{whenever }\left\Vert P^{n}x\right\Vert _{0}\leq\frac{1}{2}\prod_{i=1}^{\infty}(1+\lambda_{i})^{-1}\cdot\left\Vert x\right\Vert _{0}.\label{eq: uniform rel with projection}
\end{equation}

The validity of (\ref{eq: uniform rel with projection}) is immediately
deduced from the validity of (\ref{eq: equiv norm}) and (\ref{eq: rel with projection}):
in fact if $x$ satisfies $\left\Vert P^{n}x\right\Vert _{0}\leq\frac{1}{2}\prod_{i=1}^{\infty}(1+\lambda_{i})^{-1}\cdot\left\Vert x\right\Vert _{0}$,
then by (\ref{eq: equiv norm})
\[
\left\Vert P^{n}x\right\Vert _{n-1}\leq\left\Vert P^{n}x\right\Vert _{0}\leq\frac{1}{2}\prod_{i=1}^{\infty}(1+\lambda_{i})^{-1}\cdot\left\Vert x\right\Vert _{0}\leq\frac{1}{2}\prod_{i=1}^{\infty}(1+\lambda_{i})^{-1}\cdot\prod_{i=1}^{n-1}(1+\lambda_{i})\cdot\left\Vert x\right\Vert _{n-1}
\]
\[
=\frac{1/2}{1+\lambda_{n}}\prod_{i=n+1}^{\infty}(1+\lambda_{i})^{-1}\cdot\left\Vert x\right\Vert _{n-1}\leq\frac{1/2}{1+\lambda_{n}}\left\Vert x\right\Vert _{n-1};
\]
hence (\ref{eq: rel with projection}) implies that $\left\Vert x\right\Vert _{n-1}=(1+\lambda_{n})\left\Vert x\right\Vert _{n}$.
\end{rem}

In order to motivate the next step, let us notice that for a fixed
$x\in X$ the sequence $\left\{ \left\Vert x\right\Vert _{n}\right\} _{n\geq0}$
has the same qualitative behavior since it is a decreasing sequence;
on the other hand the quantitative rate of decrease changes with $n$.
In fact it is clear that for a fixed $x\in X$, the condition $\left\Vert P^{n}x\right\Vert _{0}\leq\frac{1}{2}\prod_{i=1}^{\infty}(1+\lambda_{i})^{-1}\cdot\left\Vert x\right\Vert _{0}$
is eventually satisfied, so the sequence $\left\{ \left\Vert x\right\Vert _{n}\right\} _{n\geq0}$
eventually decreases with rate $(1+\lambda_{n})^{-1}$. On the other
hand if $x\in X^{N}$, then for the terms $n=1,\dots,N$ the rate
of decrease is $(1+\lambda_{n}\gamma_{n})^{-1}$. 
This makes it possible to rescale the norms $\left\{ \left\Vert \cdot\right\Vert _{n}\right\} _{n\geq0}$,
obtaining norms $\left\{ \left|\left|\left|\cdot\right|\right|\right|_{n} \right\} _{n\geq0}$,
in a way to have a qualitatively different behavior, increasing for $n=1,\dots,N$ and eventually decreasing.
This property is crucial since it allows us to assure that, for $x\in X^N$, the norms $\left|\left|\left|x\right|\right|\right|_{n}$
for $n=0,\dots ,N-1$ are quantitatively smaller than $\left|\left|\left|x\right|\right|\right|_{N}$ and thus 
do not enter in the gluing procedure. As we have hinted at at the end of the previous section and as it will be apparent
in the proof of Lemma \ref{lem:||| is equiv norm}, this is exactly what we need in order the approximation
on $X^N$ to improve with $N$.

\begin{defn}
Let
\[
C:=\prod_{i=1}^{\infty}\frac{1+\lambda_{i}\gamma_{i}}{1+\lambda_{i}\frac{1+\gamma_{i}}{2}},
\]
\[
\left|\left|\left|\cdot\right|\right|\right|_{n}:=C\cdot\prod_{i=1}^{n}\left(1+\lambda_{i}\frac{1+\gamma_{i}}{2}\right)\cdot\left\Vert \cdot\right\Vert _{n}.
\]
For later convenience, let us also set
\[
\left|\left|\left|\cdot\right|\right|\right|_{\infty}=\sup_{n\geq0}\left|\left|\left|\cdot\right|\right|\right|_{n}.
\]
\end{defn}

The qualitative behavior of $\left\{ \left|\left|\left|x\right|\right|\right|_{n}\right\} _{n\geq0}$
is expressed in the following obvious, though crucial, properties
of the norms $\left|\left|\left|\cdot\right|\right|\right|_{n}$.
In particular, (a) will be used to show that the gluing together locally takes into account
only finitely many terms; this will allow us to preserve the smoothness in Lemma \ref{lem:smooth norm}.
(b) expresses the fact that on $X^N$ the norms $\left\{ \left|\left|\left|\cdot\right|\right|\right|_{n} \right\} _{n=0}^{N-1}$
are smaller than  $\left|\left|\left|\cdot\right|\right|\right|_{N}$ and will be used in Lemma \ref{lem:||| is equiv norm}
to obtain the improvement of the approximation.

\begin{fact}
\label{Fact propr of |||.|||} \textup{(a)} For every $x\in X$ there
is $n_{0}\in\mathbb{N}$ such that for every $n\geq n_{0}$
\[
\left|\left|\left|x\right|\right|\right|_{n}=\frac{1+\lambda_{n}\frac{1+\gamma_{n}}{2}}{1+\lambda_{n}}\left|\left|\left|x\right|\right|\right|_{n-1}.
\]
In particular, it suffices to take any $n_{0}$ such that $\left\Vert P^{n}x\right\Vert _{0}\leq\frac{1}{2}\prod_{i=1}^{\infty}(1+\lambda_{i})^{-1}\cdot\left\Vert x\right\Vert _{0}$
for every $n\geq n_{0}$.

\textup{(b)} If $x\in X^{N}$, then for $n=1,\dots,N$ we have
\[
\left|\left|\left|x\right|\right|\right|_{n}=\frac{1+\lambda_{n}\frac{1+\gamma_{n}}{2}}{1+\lambda_{n}\gamma_{n}}\left|\left|\left|x\right|\right|\right|_{n-1}.
\]
\end{fact}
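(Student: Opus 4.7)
My plan is to carry out a direct computation from the definition of $|||\cdot|||_n$. For $x\ne 0$, unrolling the product in the definition gives
\[
\frac{|||x|||_n}{|||x|||_{n-1}} \;=\; \Bigl(1+\lambda_n\tfrac{1+\gamma_n}{2}\Bigr)\cdot\frac{\|x\|_n}{\|x\|_{n-1}},
\]
so in each of the two statements it remains only to identify the value of $\|x\|_n/\|x\|_{n-1}$, using the equalities (\ref{eq: rel on tails}) and (\ref{eq: uniform rel with projection}) established in the previous section. (For $x=0$ both sides vanish and there is nothing to check.)

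For part (a) I would invoke (\ref{eq: uniform rel with projection}). Since $\{e_i\}_{i\ge 1}$ is a Schauder basis of $(X,\|\cdot\|_0)$, one has $\|P^n x\|_0\to 0$ as $n\to\infty$ for any fixed $x$, whereas $\|x\|_0$ is a fixed positive number. Therefore the quantitative condition $\|P^n x\|_0\le \tfrac12\prod_{i=1}^{\infty}(1+\lambda_i)^{-1}\,\|x\|_0$ is satisfied for all $n$ from some index $n_0$ onwards, and any such $n_0$ works; this already verifies the ``in particular'' clause. For $n\ge n_0$, (\ref{eq: uniform rel with projection}) gives $\|x\|_{n-1}=(1+\lambda_n)\|x\|_n$, and substituting into the ratio above produces the stated equality.

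For part (b) I observe that the tail spaces are nested: if $1\le n\le N$, then $X^N\subseteq X^n$, because $\{e_i\}_{i\ge N+1}\subseteq\{e_i\}_{i\ge n+1}$. Hence $x\in X^N$ forces $x\in X^n$, so that (\ref{eq: rel on tails}) at index $n$ applies and yields $\|x\|_{n-1}=(1+\lambda_n\gamma_n)\|x\|_n$; substituting gives the asserted formula. I expect no real obstacle here: the entire content of the fact lies in the careful choice of the prefactor $C\prod_{i=1}^{n}(1+\lambda_i\tfrac{1+\gamma_i}{2})$ in the definition of $|||\cdot|||_n$, arranged precisely so that --- depending on whether $x$ sits in a tail space (case (b)) or has small enough tail projection to trigger (\ref{eq: uniform rel with projection}) (case (a)) --- the ratio $|||x|||_n/|||x|||_{n-1}$ collapses to one of the two specific expressions stated.
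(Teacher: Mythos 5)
Your argument is correct and is essentially identical to the paper's: both reduce the claim to the known ratios $\|x\|_n/\|x\|_{n-1}$ supplied by (\ref{eq: uniform rel with projection}) and (\ref{eq: rel on tails}), respectively, and then translate via the scaling factor $C\prod_{i=1}^n(1+\lambda_i\frac{1+\gamma_i}{2})$ in the definition of $|||\cdot|||_n$. The only cosmetic difference is that you phrase the translation as a ratio identity (and sensibly dispose of $x=0$ separately) whereas the paper writes out the substitution directly.
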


\begin{proof}
(a) Since $P^{n}x\rightarrow0$ as $n\rightarrow\infty$, condition
(\ref{eq: uniform rel with projection}) implies that there is $n_{0}$
such that for every $n\geq n_{0}$ we have $\left\Vert x\right\Vert _{n}=(1+\lambda_{n})^{-1}\left\Vert x\right\Vert _{n-1}$.
Then it suffices to translate this to the $\left|\left|\left|\cdot\right|\right|\right|_{n}$
norms:
\[
\left|\left|\left|x\right|\right|\right|_{n}=\left(1+\lambda_{n}\frac{1+\gamma_{n}}{2}\right)\cdot C\cdot\prod_{i=1}^{n-1}\left(1+\lambda_{i}\frac{1+\gamma_{i}}{2}\right)\cdot\left\Vert x\right\Vert _{n}=
\]
\[
\frac{1+\lambda_{n}\frac{1+\gamma_{n}}{2}}{1+\lambda_{n}}\cdot C\cdot\prod_{i=1}^{n-1}\left(1+\lambda_{i}\frac{1+\gamma_{i}}{2}\right)\cdot\left\Vert x\right\Vert _{n-1}=\frac{1+\lambda_{n}\frac{1+\gamma_{n}}{2}}{1+\lambda_{n}}\left|\left|\left|x\right|\right|\right|_{n-1}.
\]
(b) If $x\in X^{N}$ and $n=1,\dots,N$, then $x\in X^{n}$ too;
thus by (\ref{eq: rel on tails}) we have $\left\Vert x\right\Vert _{n}=(1+\lambda_{n}\gamma_{n})^{-1}\left\Vert x\right\Vert _{n-1}$.
Now exactly the same calculation as in the other case gives the result.
\end{proof}

We can now conclude the renorming procedure: first we smoothen up
the norms $\left|\left|\left|\cdot\right|\right|\right|_{n}$ and
then we glue together all these smooth norms. Fix a decreasing sequence
$\delta_{n}\searrow0$ such that for every $n\geq0$
\[
(\dagger)\qquad(1+\delta_{n})\frac{1+\lambda_{n+1}\gamma_{n+1}}{1+\lambda_{n+1}\frac{1+\gamma_{n+1}}{2}}\leq1-\delta_{n}
\]
 (of course this is possible since $\gamma_{n+1}<1$). Then we apply
the main result in \cite{HaTa} (Theorem 2.10 in their paper) to
find $C^{k}$-smooth norms $\left\{ \left|\left|\left|\cdot\right|\right|\right|_{(s),n}\right\} _{n\geq0}$
such that for every $n$
\[
\left|\left|\left|\cdot\right|\right|\right|_{n}\leq\left|\left|\left|\cdot\right|\right|\right|_{(s),n}\leq(1+\delta_{n})\left|\left|\left|\cdot\right|\right|\right|_{n}.
\]

Next, let $\varphi_{n}:[0,\infty)\rightarrow[0,\infty)$ be $C^{\infty}$-smooth,
convex and such that $\varphi_{n}\equiv0$ on $[0,1-\delta_{n}]$
and $\varphi_{n}(1)=1$; note that of course the $\varphi_{n}$'s
are strictly monotonically increasing on $[1-\delta_{n},\infty)$. Finally
define $\Phi:X\rightarrow[0,\infty]$ by
\[
\Phi(x):=\sum_{n\geq0}\varphi_{n}\left(\left|\left|\left|x\right|\right|\right|_{(s),n}\right)
\]
and let $\left|\left|\left|\cdot\right|\right|\right|$ be the Minkowski
functional of the set $\left\{ \Phi\leq1\right\} $.

\medskip{}

The fact that $\left|\left|\left|\cdot\right|\right|\right|$ is the
desired norm is now an obvious consequence of the next two lemmas.
In the first one we show that $\left|\left|\left|\cdot\right|\right|\right|$
is indeed a norm and that the approximation on $X^{N}$ improves with $N$.

\begin{lem}
\label{lem:||| is equiv norm}$\left|\left|\left|\cdot\right|\right|\right|$
is a norm, equivalent to the original norm $\left\Vert \cdot\right\Vert $
of $X$. 

Moreover for every $N\geq0$ we have
\[
\prod_{i=N+1}^{\infty}\left(1+\lambda_{i}\right)^{-1}\cdot\left\Vert \cdot\right\Vert 
\leq\left|\left|\left|\cdot\right|\right|\right|\leq
\frac{1+\delta_{N}}{1-\delta_{N}}\cdot\prod_{i=N+1}^{\infty}\left(1+\lambda_{i}\right)\cdot\left\Vert \cdot\right\Vert
\qquad\text{on }X^{N}.
\]
\end{lem}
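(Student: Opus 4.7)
The plan is to split the claim into three parts: first that $\left|\left|\left|\cdot\right|\right|\right|$ is an equivalent norm, then the upper bound on $X^{N}$, and finally the matching lower bound. The two main ingredients throughout will be Fact~\ref{Fact propr of |||.|||}, which pins down the behaviour of the sequence $\left\{\left|\left|\left|x\right|\right|\right|_{n}\right\}$ on $X^{N}$, and the inequality $(\dagger)$, which was tailored precisely so that each step in the increasing regime, even after smoothing, is still controlled by a factor $1-\delta_{n}$.

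For the equivalent norm claim I would verify that $\left\{\Phi\leq 1\right\}$ is a bounded, absorbing, symmetric convex body. Convexity and symmetry follow immediately from the fact that $\Phi$ is a sum of compositions of even, convex, nondecreasing functions with norms. Boundedness uses the convexity of $\varphi_{0}$ combined with $\varphi_{0}(1-\delta_{0})=0<1=\varphi_{0}(1)$, which forces at least linear growth of $\varphi_{0}$ past $1$; hence $\Phi(x)\leq 1$ keeps $\|x\|$ in a bounded range. For absorbency, the explicit definition of $\left|\left|\left|\cdot\right|\right|\right|_{n}$ together with (\ref{eq: equiv norm}) yields a uniform bound $\left|\left|\left|x\right|\right|\right|_{(s),n}\leq C'\|x\|$ valid for all $n\geq 0$, so that for $\|x\|$ small enough every $\left|\left|\left|x\right|\right|\right|_{(s),n}$ lies in $[0,1-\delta_{0}]\subseteq[0,1-\delta_{n}]$ and $\Phi(x)=0$.

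For the upper bound on $X^{N}$, the strategy is to choose $t:=\frac{1+\delta_{N}}{1-\delta_{N}}\prod_{i=N+1}^{\infty}(1+\lambda_{i})\|x\|$ and to prove the stronger statement $\Phi(x/t)=0$ by checking $\left|\left|\left|x\right|\right|\right|_{(s),n}/t\leq 1-\delta_{n}$ for every $n$. For $n\geq N$, the monotonicity $\|x\|_{n}\leq\|x\|_{N}$ from (\ref{eq: equiv norm}) combined with Fact~\ref{Fact propr of |||.|||}(b) gives $\left|\left|\left|x\right|\right|\right|_{n}\leq\prod_{i=N+1}^{\infty}(1+\lambda_{i})\left|\left|\left|x\right|\right|\right|_{N}$; together with $\left|\left|\left|x\right|\right|\right|_{N}\leq\|x\|$ (since each $(1+\lambda_{i}\gamma_{i})/(1+\lambda_{i}(1+\gamma_{i})/2)<1$) and $\delta_{n}\leq\delta_{N}$, this forces $\left|\left|\left|x\right|\right|\right|_{(s),n}/t\leq 1-\delta_{N}\leq 1-\delta_{n}$. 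For $n<N$, Fact~\ref{Fact propr of |||.|||}(b) and $(\dagger)$ yield the contractive inequality $\left|\left|\left|x\right|\right|\right|_{(s),n}\leq(1-\delta_{n})\left|\left|\left|x\right|\right|\right|_{(s),n+1}$, which iterates (the remaining factors $1-\delta_{j}$ being $\leq 1$) down to $\left|\left|\left|x\right|\right|\right|_{(s),n}\leq(1-\delta_{n})\left|\left|\left|x\right|\right|\right|_{(s),N}\leq(1-\delta_{n})t$.

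For the lower bound on $X^{N}$, a single term of $\Phi$ suffices: for $s<\left|\left|\left|x\right|\right|\right|_{(s),N}$ the strict monotonicity of $\varphi_{N}$ on $[1-\delta_{N},\infty)$ gives $\varphi_{N}(\left|\left|\left|x\right|\right|\right|_{(s),N}/s)>\varphi_{N}(1)=1$, hence $\Phi(x/s)>1$, so $\left|\left|\left|x\right|\right|\right|\geq\left|\left|\left|x\right|\right|\right|_{(s),N}\geq\left|\left|\left|x\right|\right|\right|_{N}$. Fact~\ref{Fact propr of |||.|||}(b) then evaluates $\left|\left|\left|x\right|\right|\right|_{N}=\prod_{i=N+1}^{\infty}\frac{1+\lambda_{i}\gamma_{i}}{1+\lambda_{i}(1+\gamma_{i})/2}\|x\|$, and the elementary inequality $(1+\lambda_{i}\gamma_{i})(1+\lambda_{i})\geq 1+\lambda_{i}(1+\gamma_{i})/2$ produces the desired $\prod_{i=N+1}^{\infty}(1+\lambda_{i})^{-1}\|x\|$. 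The main obstacle I anticipate is in the upper bound: both regimes $n<N$ and $n\geq N$ must be handled with one single choice of $t$, and whereas $(\dagger)$ is exactly what drives the telescoping argument for $n<N$, the extra factor $\prod_{i=N+1}^{\infty}(1+\lambda_{i})$ in $t$ is exactly what is needed to absorb the possible growth of $\left|\left|\left|x\right|\right|\right|_{n}$ past $\left|\left|\left|x\right|\right|\right|_{N}$ in the transition zone $N<n<n_{0}(x)$, where neither part (a) nor part (b) of Fact~\ref{Fact propr of |||.|||} directly applies.
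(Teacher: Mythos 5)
Your proposal is correct and follows essentially the same route as the paper; the only organizational difference is that you verify $\Phi(x/t)=0$ directly for $t=\frac{1+\delta_N}{1-\delta_N}\prod_{i=N+1}^\infty(1+\lambda_i)\|x\|$ and extract the lower bound from the single term $\varphi_N$, whereas the paper interposes $\left|\left|\left|\cdot\right|\right|\right|_\infty$ as an explicit stepping stone and then chains the three comparisons $\left|\left|\left|\cdot\right|\right|\right| \leftrightarrow \left|\left|\left|\cdot\right|\right|\right|_\infty \leftrightarrow \left|\left|\left|\cdot\right|\right|\right|_N \leftrightarrow \|\cdot\|$. The substantive content --- Fact~\ref{Fact propr of |||.|||}(b) together with $(\dagger)$ to telescope in the regime $n<N$, the bound $\left|\left|\left|\cdot\right|\right|\right|_n \le \prod_{i=N+1}^\infty(1+\lambda_i)\left|\left|\left|\cdot\right|\right|\right|_N$ from (\ref{eq: equiv norm}) in the regime $n\ge N$, monotonicity of $t\mapsto\frac{1-t}{1+t}$, and the explicit evaluation of $\left|\left|\left|\cdot\right|\right|\right|_N$ on $X^N$ --- is identical.
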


\begin{proof}
We start by observing that for every $N\geq0$
\[
\left\{ x\in X^{N}:\left|\left|\left|x\right|\right|\right|_{\infty}\leq\frac{1-\delta_{N}}{1+\delta_{N}}\right\}
\subseteq\left\{ x\in X^{N}:\Phi(x)\leq1\right\}\subseteq
\left\{ x\in X^{N}:\left|\left|\left|x\right|\right|\right|_{\infty}\leq1\right\}.
\]
In fact, pick $x\in X^{N}$ such that $\Phi(x)\leq1$, so in particular $\varphi_{n}\left(\left|\left|\left|x\right|\right|\right|_{(s),n}\right)\leq1$
for every $n$. The inequality $\left|\left|\left|\cdot\right|\right|\right|_{n}\leq\left|\left|\left|\cdot\right|\right|\right|_{(s),n}$
and the properties of $\varphi_{n}$ then imply $\left|\left|\left|x\right|\right|\right|_{n}\leq1$
for every $n$. This proves the right inclusion. For the first inclusion, we actually
show that if $x\in X^N$ satisfies $\left|\left|\left|x\right|\right|\right|_{\infty}
\leq\frac{1-\delta_{N}}{1+\delta_{N}}$, then $\Phi(x)=0$.
To see this, fix any $n\geq N$; since the function $t\mapsto\frac{1-t}{1+t}$ is decreasing on $[0,1]$
and the sequence $\delta_{n}$ is decreasing too, we deduce
\[
\left|\left|\left|x\right|\right|\right|_{n}\leq\left|\left|\left|x\right|\right|\right|_{\infty}
\leq\frac{1-\delta_{N}}{1+\delta_{N}}\leq\frac{1-\delta_{n}}{1+\delta_{n}}.
\]
Hence $\left|\left|\left|x\right|\right|\right|_{(s),n}\leq1-\delta_{n}$
and $\varphi_{n}\left(\left|\left|\left|x\right|\right|\right|_{(s),n}\right)=0$
for every $n\geq N$. For the remaining values $n=0,\dots,N-1$
we use (b) in Fact \ref{Fact propr of |||.|||} and condition $(\dagger)$:
\[
\left|\left|\left|x\right|\right|\right|_{(s),n}\leq(1+\delta_{n})\left|\left|\left|x\right|\right|\right|_{n}=
(1+\delta_{n})\frac{1+\lambda_{n+1}\gamma_{n+1}}{1+\lambda_{n+1}\frac{1+\gamma_{n+1}}{2}}
\cdot\left|\left|\left|x\right|\right|\right|_{n+1}
\]
\[
\leq(1-\delta_{n})\left|\left|\left|x\right|\right|\right|_{n+1}\leq1-\delta_{n};
\]
hence $\varphi_{n}\left(\left|\left|\left|x\right|\right|\right|_{(s),n}\right)=0$
for $n=0,\dots,N-1$ too. This implies $\Phi(x)=0$ and proves the first
inclusion.

Taking in particular $N=0$, we see that $\left\{ \Phi\leq1\right\}$ is a bounded neighborhood
of the origin in $(X,\left|\left|\left|\cdot\right|\right|\right|_{\infty})$. Since it is clearly convex and symmetric,
we deduce that $\left\{ \Phi\leq1\right\} $ is a BCSB relative to $\left|\left|\left|\cdot\right|\right|\right|_{\infty}$.
Hence $\left|\left|\left|\cdot\right|\right|\right|$ is a norm on $X$, equivalent to $\left|\left|\left|\cdot\right|\right|\right|_{\infty}$.
The fact that $\left|\left|\left|\cdot\right|\right|\right|$ is equivalent to the original norm $\left\Vert \cdot\right\Vert$
follows immediately from the case $N=0$ in the second assertion, which we now prove.

\medskip{}

Fix $N\geq 0$; in order to estimate the distortion between $\left|\left|\left|\cdot\right|\right|\right|$
and $\left\Vert \cdot\right\Vert $ on $X^N$, we show that, on $X^N$, $\left|\left|\left|\cdot\right|\right|\right|$
is close to $\left|\left|\left|\cdot\right|\right|\right|_{\infty}$,
that $\left|\left|\left|\cdot\right|\right|\right|_{\infty}$ is close
to $\left|\left|\left|\cdot\right|\right|\right|_{N}$ and finally
that $\left|\left|\left|\cdot\right|\right|\right|_{N}$ is close
to $\left\Vert \cdot\right\Vert $.

First, passing to the associated Minkowski functionals, the above inclusions yield
\[
(*)\qquad\left|\left|\left|\cdot\right|\right|\right|_{\infty}\leq
\left|\left|\left|\cdot\right|\right|\right|\leq
\frac{1+\delta_{N}}{1-\delta_{N}}\left|\left|\left|\cdot\right|\right|\right|_{\infty}
\qquad\text{on }X^{N}.
\]

Next, we compare $\left|\left|\left|\cdot\right|\right|\right|_{\infty}$
with $\left|\left|\left|\cdot\right|\right|\right|_{N}$. Of course
$\left|\left|\left|\cdot\right|\right|\right|_{N}\leq\left|\left|\left|\cdot\right|\right|\right|_{\infty}$
and by property (b) in Fact \ref{Fact propr of |||.|||} already
used above we also have $\left|\left|\left|\cdot\right|\right|\right|_{n}\leq\left|\left|\left|\cdot\right|\right|\right|_{N}$
for $n\leq N$. We thus fix $n>N$ and observe
\[
\left|\left|\left|\cdot\right|\right|\right|_{n}:=C\prod_{i=1}^{n}\left(1+\lambda_{i}\frac{1+\gamma_{i}}{2}\right)\cdot\left\Vert \cdot\right\Vert _{n}\leq
\]
\[
\prod_{i=N+1}^{n}\left(1+\lambda_{i}\frac{1+\gamma_{i}}{2}\right)\cdot C\cdot\prod_{i=1}^{N}\left(1+\lambda_{i}\frac{1+\gamma_{i}}{2}\right)\cdot\left\Vert \cdot\right\Vert _{N}=
\]
\[
\prod_{i=N+1}^{n}\left(1+\lambda_{i}\frac{1+\gamma_{i}}{2}\right)\cdot\left|\left|\left|\cdot\right|\right|\right|_{N}\leq\prod_{i=N+1}^{\infty}\left(1+\lambda_{i}\right)\cdot\left|\left|\left|\cdot\right|\right|\right|_{N}.
\]
This yields
\[
(*)\qquad\left|\left|\left|\cdot\right|\right|\right|_{N}\leq\left|\left|\left|\cdot\right|\right|\right|_{\infty}\leq\prod_{i=N+1}^{\infty}\left(1+\lambda_{i}\right)\cdot\left|\left|\left|\cdot\right|\right|\right|_{N}\qquad\text{on }X^{N}.
\]

Finally, we compare $\left|\left|\left|\cdot\right|\right|\right|_{N}$
with $\left\Vert \cdot\right\Vert _{0}$. The subspaces $X^{N}$ are
decreasing, so (\ref{eq: rel on tails}) implies $\left\Vert \cdot\right\Vert =\prod_{i=1}^{N}\left(1+\lambda_{i}\gamma_{i}\right)\cdot\left\Vert \cdot\right\Vert _{N}$
on $X^{N}$; hence
\[
\left\Vert \cdot\right\Vert =\prod_{i=1}^{N}\left(1+\lambda_{i}\gamma_{i}\right)\cdot\prod_{i=1}^{\infty}\frac{1+\lambda_{i}\frac{1+\gamma_{i}}{2}}{1+\lambda_{i}\gamma_{i}}\cdot\prod_{i=1}^{N}\left(1+\lambda_{i}\frac{1+\gamma_{i}}{2}\right)^{-1}\cdot\left|\left|\left|\cdot\right|\right|\right|_{N}
\]
\[
=\prod_{i=N+1}^{\infty}\frac{1+\lambda_{i}\frac{1+\gamma_{i}}{2}}{1+\lambda_{i}\gamma_{i}}\cdot\left|\left|\left|\cdot\right|\right|\right|_{N}.
\]
This implies in particular
\[
(*)\qquad\left|\left|\left|\cdot\right|\right|\right|_{N}\leq\left\Vert \cdot\right\Vert \leq\prod_{i=N+1}^{\infty}\left(1+\lambda_{i}\right)\cdot\left|\left|\left|\cdot\right|\right|\right|_{N}\qquad\text{on }X^{N};
\]
combining the $(*)$ inequalities concludes the proof of the lemma.
\end{proof}

\begin{rem}
The estimate of the distortion in the particular case $N=0$ is in fact shorter than the general case given above.
In fact, property (\ref{eq: equiv norm}) obviously implies $\left\Vert \cdot\right\Vert _{n}\leq\left\Vert \cdot\right\Vert \leq\prod_{i=1}^{n}(1+\lambda_{i})\cdot\left\Vert \cdot\right\Vert _{n}$.
It easily follows that for every $n$
\[
\prod_{i=1}^{\infty}\left(1+\lambda_{i}\right)^{-1}\cdot\left\Vert \cdot\right\Vert
\leq \left|\left|\left|\cdot\right|\right|\right|_{n}\leq
\prod_{i=1}^{\infty}\left(1+\lambda_{i}\right)\cdot\left\Vert \cdot\right\Vert;
\]
it is then sufficient to combine this with the first of the $(*)$ inequalities.
\end{rem}

We finally check the regularity of $\left|\left|\left|\cdot\right|\right|\right|$.

\begin{lem}
\label{lem:smooth norm}
The norm $\left|\left|\left|\cdot\right|\right|\right|$ is $C^{k}$-smooth.
\end{lem}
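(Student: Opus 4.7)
The plan is to prove that $\Phi$ is convex and $C^{k}$-smooth on an open neighborhood of $\left\{ \Phi\leq 1\right\}$, with $\nabla\Phi(x)\neq 0$ for every $x\in\left\{ \Phi=1\right\}$; the $C^{k}$-smoothness of the Minkowski functional $\left|\left|\left|\cdot\right|\right|\right|$ on $X\setminus\{0\}$ then follows from the standard Implicit Function Theorem argument for smooth convex bodies combined with positive homogeneity. The crux is the local finiteness of the series defining $\Phi$, which is exactly what Fact \ref{Fact propr of |||.|||}(a) was designed to supply.

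For local finiteness, fix $x_{0}\in X$ with $\Phi(x_{0})\leq 1$; the right inclusion in the proof of Lemma \ref{lem:||| is equiv norm} then gives $\left|\left|\left|x_{0}\right|\right|\right|_{\infty}\leq 1$. Using the bound $\left\Vert P^{n}y\right\Vert \leq\left\Vert P^{n}x_{0}\right\Vert +(K+1)\left\Vert y-x_{0}\right\Vert$ together with $\left\Vert P^{n}x_{0}\right\Vert \to 0$, I choose $n_{0}\in\mathbb{N}$ and a sufficiently small $\left\Vert \cdot\right\Vert$-neighborhood $U$ of $x_{0}$ on which the sufficient condition in Fact \ref{Fact propr of |||.|||}(a) holds uniformly for every $n\geq n_{0}$. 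Iterating then yields
\[
\left|\left|\left|y\right|\right|\right|_{n}=\left|\left|\left|y\right|\right|\right|_{n_{0}}\prod_{i=n_{0}+1}^{n}\rho_{i},\qquad\rho_{i}:=\frac{1+\lambda_{i}\frac{1+\gamma_{i}}{2}}{1+\lambda_{i}}\in(0,1),
\]
for every $y\in U$ and $n\geq n_{0}$. The tail product $\alpha_{\infty}:=\prod_{i>n_{0}}\rho_{i}$ satisfies $\alpha_{\infty}\leq\rho_{n_{0}+1}<1$, and since $\left|\left|\left|x_{0}\right|\right|\right|_{n_{0}}\leq\left|\left|\left|x_{0}\right|\right|\right|_{\infty}\leq 1<1/\alpha_{\infty}$, after shrinking $U$ I may assume $M:=\sup_{y\in U}\left|\left|\left|y\right|\right|\right|_{n_{0}}<1/\alpha_{\infty}$. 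Then $(1+\delta_{n})M\prod_{i=n_{0}+1}^{n}\rho_{i}\to M\alpha_{\infty}<1$ while $1-\delta_{n}\to 1$, so there exists $N\geq n_{0}$ such that for every $y\in U$ and $n\geq N$
\[
\left|\left|\left|y\right|\right|\right|_{(s),n}\leq(1+\delta_{n})\left|\left|\left|y\right|\right|\right|_{n}\leq 1-\delta_{n},
\]
whence $\varphi_{n}(\left|\left|\left|y\right|\right|\right|_{(s),n})=0$ on $U$ for every $n\geq N$.

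With local finiteness established, on each such $U$ the function $\Phi$ agrees with the finite sum $\sum_{n<N}\varphi_{n}\circ\left|\left|\left|\cdot\right|\right|\right|_{(s),n}$ of $C^{k}$-smooth convex functions (each $C^{k}$ away from $0$ because $\left|\left|\left|\cdot\right|\right|\right|_{(s),n}$ is), so $\Phi$ itself is convex and $C^{k}$-smooth on an open neighborhood of $\left\{ \Phi\leq 1\right\}$ in $X\setminus\{0\}$. At any $x\in\left\{ \Phi=1\right\}$, some term $\varphi_{n}(\left|\left|\left|x\right|\right|\right|_{(s),n})$ is strictly positive, hence $\left|\left|\left|x\right|\right|\right|_{(s),n}>1-\delta_{n}$; the strict monotonicity of $\varphi_{n}$ past $1-\delta_{n}$ together with the positive homogeneity of $\left|\left|\left|\cdot\right|\right|\right|_{(s),n}$ makes $t\mapsto\Phi(tx)$ strictly increasing at $t=1$, so $\left\langle \nabla\Phi(x),x\right\rangle>0$ and in particular $\nabla\Phi(x)\neq 0$. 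The standard consequence of the Implicit Function Theorem for Minkowski functionals (cf.\ \cite{HJ book}) now delivers $C^{k}$-smoothness of $\left|\left|\left|\cdot\right|\right|\right|$ near the unit sphere, and positive homogeneity extends this to all of $X\setminus\{0\}$. The main obstacle is the uniform local finiteness step, which crucially relies on the strict inequality $\gamma_{n}<1$ furnished by Lemma \ref{lem:main lemma} and on the freedom to choose $\delta_{n}\to 0$ encoded in $(\dagger)$.
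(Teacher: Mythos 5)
Your proposal is correct and follows the same structure as the paper's proof: establish local finiteness of the series defining $\Phi$ using Fact \ref{Fact propr of |||.|||}(a) and the freedom in choosing $\lbrace\delta_n\rbrace$, then apply the Implicit Function Theorem after verifying that the radial derivative $\langle\nabla\Phi(x),x\rangle=\sum_{n}\varphi_{n}'\bigl(\left|\left|\left|x\right|\right|\right|_{(s),n}\bigr)\left|\left|\left|x\right|\right|\right|_{(s),n}$ is strictly positive on $\lbrace\Phi=1\rbrace$. The only cosmetic difference is that you work on a neighborhood of $\lbrace\Phi\le 1\rbrace$ and obtain the truncation index $N$ via the limit estimate $(1+\delta_n)M\prod_{i=n_0+1}^{n}\rho_i\to M\alpha_\infty<1$ versus $1-\delta_n\to1$, whereas the paper works on the open set $\lbrace\Phi<2\rbrace$ and invokes $(\dagger)$ twice to get the explicit cutoff $n_0+2$; both are correct.
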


\begin{proof}
We first show that for every $x$ in the set $\left\{ \Phi<2\right\} $
there is a neighborhood $\mathcal{U}$ of $x$ (in $X$) where the
function $\Phi$ is expressed by a finite sum. We have already seen
in the proof of Lemma \ref{lem:||| is equiv norm} that $\Phi=0$
in a neighborhood of $0$, so the assertion is true for $x=0$; hence
we can fix $x\neq0$ such that $\Phi(x)<2$. Observe that clearly
the properties of $\varphi_{n}$ imply $\varphi_{n}(1+\delta_{n})\geq2$;
thus $x$ satisfies $\left|\left|\left|x\right|\right|\right|_{n}\leq\left|\left|\left|x\right|\right|\right|_{(s),n}\leq1+\delta_{n}$
for every $n$.

Denote by $c:=\frac{1}{2}\prod_{i=1}^{\infty}(1+\lambda_{i})^{-1}$
and choose $n_{0}$ such that $\left\Vert P^{n}x\right\Vert \leq\frac{c}{2}\cdot\left\Vert x\right\Vert $
for every $n\geq n_{0}$ (this is possible since $P^{n}x\rightarrow0$).
Next, fix $\varepsilon>0$ small so that $\frac{c}{2}+K\varepsilon\leq(1-\varepsilon)c$
and $(1+\varepsilon)(1-\delta_{n_{0}})\leq1$, and let $\mathcal{U}$
be the following neighborhood of $x$:
\[
\mathcal{U}:=\left\{ y\in X:\left\Vert y-x\right\Vert <\varepsilon\left\Vert x\right\Vert \text{ and }\left|\left|\left|y\right|\right|\right|_{n_{0}}<(1+\varepsilon)\left|\left|\left|x\right|\right|\right|_{n_{0}}\right\} .
\]
Clearly for $y\in\mathcal{U}$ we have $\left\Vert x\right\Vert \leq\frac{1}{1-\varepsilon}\left\Vert y\right\Vert $;
thus for $y\in\mathcal{U}$ and $n\geq n_{0}$ we have
\[
\left\Vert P^{n}y\right\Vert \leq\left\Vert P^{n}y-P^{n}x\right\Vert +\left\Vert P^{n}x\right\Vert \leq K\varepsilon\left\Vert x\right\Vert +\frac{c}{2}\cdot\left\Vert x\right\Vert \leq(1-\varepsilon)c\left\Vert x\right\Vert \leq c\left\Vert y\right\Vert .
\]
Hence (a) of Fact \ref{Fact propr of |||.|||} implies that $\left|\left|\left|y\right|\right|\right|_{n}=\frac{1+\lambda_{n}\frac{1+\gamma_{n}}{2}}{1+\lambda_{n}}\left|\left|\left|y\right|\right|\right|_{n-1}$
for every $n\geq n_{0}$ and $y\in\mathcal{U}$ (let us explicitly
stress the crucial fact that $n_{0}$ does not depend on $y\in\mathcal{U}$).

We have $\left|\left|\left|y\right|\right|\right|_{n_{0}}<(1+\varepsilon)\left|\left|\left|x\right|\right|\right|_{n_{0}}\leq(1+\varepsilon)(1+\delta_{n_{0}})$;
using this bound and the previous choices of the parameters (in particular
we use twice $(\dagger)$ and twice the fact that $\delta_{n}$ is
decreasing), for every $n\geq n_{0}+2$ and $y\in\mathcal{U}$ we
estimate
\[
\left|\left|\left|y\right|\right|\right|_{(s),n}\leq(1+\delta_{n})\left|\left|\left|y\right|\right|\right|_{n}=(1+\delta_{n})\prod_{i=n_{0}+1}^{n}\frac{1+\lambda_{i}\frac{1+\gamma_{i}}{2}}{1+\lambda_{i}}\cdot\left|\left|\left|y\right|\right|\right|_{n_{0}}
\]
\[
\leq(1+\delta_{n})\prod_{i=n_{0}+1}^{n}\frac{1+\lambda_{i}\frac{1+\gamma_{i}}{2}}{1+\lambda_{i}}\cdot(1+\varepsilon)(1+\delta_{n_{0}})\overset{(\dagger)}{\leq}(1+\delta_{n})\prod_{i=n_{0}+2}^{n}\frac{1+\lambda_{i}\frac{1+\gamma_{i}}{2}}{1+\lambda_{i}}\cdot(1+\varepsilon)(1-\delta_{n_{0}})
\]
\[
\leq(1+\delta_{n})\prod_{i=n_{0}+2}^{n}\frac{1+\lambda_{i}\frac{1+\gamma_{i}}{2}}{1+\lambda_{i}}\leq(1+\delta_{n-1})\frac{1+\lambda_{n}\frac{1+\gamma_{n}}{2}}{1+\lambda_{n}}\cdot\prod_{i=n_{0}+2}^{n-1}\frac{1+\lambda_{i}\frac{1+\gamma_{i}}{2}}{1+\lambda_{i}}
\]
\[
\leq(1+\delta_{n-1})\frac{1+\lambda_{n}\frac{1+\gamma_{n}}{2}}{1+\lambda_{n}}\overset{(\dagger)}{\leq}1-\delta_{n-1}\leq1-\delta_{n}.
\]
It follows that $\varphi_{n}\left(\left|\left|\left|y\right|\right|\right|_{(s),n}\right)=0$
for $n\geq n_{0}+2$ and $y\in\mathcal{U}$, hence
\[
\Phi=\sum_{n=0}^{n_{0}+2}\varphi_{n}\left(\left|\left|\left|\cdot\right|\right|\right|_{(s),n}\right)\qquad\text{on }\mathcal{U}.
\]
 This obviously implies that $\Phi$ is $C^{k}$-smooth on the set
$\left\{ \Phi<2\right\} $ and in particular $\left\{ \Phi<2\right\} $
is an open set. Concerning the regularity of $\Phi$, we also observe
here that $\Phi$ is lower semi-continuous on $X$ (this follows immediately
from the fact that $\Phi$ is the sum of a series of positive continuous
functions).

\medskip{}

The last step consists in applying the Implicit Function theorem (see e.g. \cite{HJ book}, Theorem 1.87) and
deduce the $C^{k}$-smoothness of $\left|\left|\left|\cdot\right|\right|\right|$
from the one of $\Phi$; this argument is quite well known, but equally
short, so we decided to present it. The set
\[
V:=\left\{ (x,\rho)\in\left(X\backslash\left\{ 0\right\} \right)\times(0,\infty):\rho^{-1}\cdot x\in\left\{ \Phi<2\right\} \right\}
\]
is open in $X\times(0,\infty)$ and the function $\Psi:V\rightarrow\mathbb{R}$
defined by $\Psi(x,\rho):=\Phi(\rho^{-1}\cdot x)$ is $C^{k}$-smooth
on $V$.

We notice that for every $h\in X\backslash\left\{ 0\right\} $ there
is a unique $\rho>0$ such that $(h,\rho)\in V$ and $\Psi(h,\rho)=1$;
moreover, $\rho=\left|\left|\left|h\right|\right|\right|$. In fact
the functions $\varphi_{n}$ are strictly increasing on the set where
they are positive, so $t\mapsto\Phi(th)$ is strictly increasing where
it is positive; hence there is at most one $\rho$ as above. Also,
$\left|\left|\left|h\right|\right|\right|=\inf\left\{ t>0:\Phi(t^{-1}h)\leq1\right\} $,
so for every $\varepsilon>0$ we have $\Phi\left(\frac{1}{\left|\left|\left|h\right|\right|\right|+\varepsilon}h\right)\leq1$;
as $\Phi$ is lower semi-continuous, we deduce $\Phi\left(\left|\left|\left|h\right|\right|\right|^{-1}h\right)\leq1$.
If it were that $\Phi\left(\left|\left|\left|h\right|\right|\right|^{-1}h\right)<1$,
then from the continuity of $\Phi$ on $\left\{ \Phi<2\right\} $
we would deduce $\Phi\left(\frac{1}{\left|\left|\left|h\right|\right|\right|-\varepsilon}h\right)\leq1$
for $\varepsilon>0$ small; however this contradicts $\left|\left|\left|h\right|\right|\right|$
being the infimum. Hence $\Phi\left(\left|\left|\left|h\right|\right|\right|^{-1}h\right)=1$
and in particular the unique $\rho$ as above is $\rho=\left|\left|\left|h\right|\right|\right|$.

In other words, the equation $\Psi=1$ on $V$ globally defines a
unique implicit function on $X\backslash\left\{ 0\right\} $, which
is given by $\rho(h)=\left|\left|\left|h\right|\right|\right|$. Since
\[
D_{2}\Psi(h,\rho)=\frac{-1}{\rho^{2}}\Phi'(\rho^{-1}h)h=\frac{-1}{\rho^{2}}\sum_{n\geq0}\varphi_{n}'\left(\left|\left|\left|\rho^{-1}h\right|\right|\right|_{(s),n}\right)\left|\left|\left|h\right|\right|\right|_{(s),n}
\]
(where $D_{2}\Psi$ denotes the partial derivative of $\Psi$ in its
second variable), we have
\[
D_{2}\Psi(h,\left|\left|\left|h\right|\right|\right|)=\frac{-1}{\left|\left|\left|h\right|\right|\right|^{2}}\sum_{n\geq0}\varphi_{n}'\left(\frac{1}{\left|\left|\left|h\right|\right|\right|}\left|\left|\left|h\right|\right|\right|_{(s),n}\right)\left|\left|\left|h\right|\right|\right|_{(s),n}.
\]
The condition $\Phi\left(\left|\left|\left|h\right|\right|\right|^{-1}h\right)=1$
implies $\varphi_{n}\left(\frac{1}{\left|\left|\left|h\right|\right|\right|}\left|\left|\left|h\right|\right|\right|_{(s),n}\right)>0$
for some $n$, hence $\varphi_{n}'\left(\frac{1}{\left|\left|\left|h\right|\right|\right|}\left|\left|\left|h\right|\right|\right|_{(s),n}\right)>0$
too and $D_{2}\Psi(h,\left|\left|\left|h\right|\right|\right|)\neq0$
on $X\backslash\left\{ 0\right\} $. Thus the Implicit Function theorem
yields that the implicitly defined function shares the same regularity
as $\Psi$, i.e. $\left|\left|\left|\cdot\right|\right|\right|$ is
$C^{k}$-smooth on $X\backslash\left\{ 0\right\} $.
\end{proof}

\begin{proof}
[Proof of Theorem \ref{thm:Ck norm improving}]
Fix a separable Banach space as in the statement and a sequence $\left\{ \varepsilon_{N}\right\} _{N\geq0}$
of positive numbers. We find a sequence $\left\{ \lambda_{i}\right\} _{i\geq1}\subseteq(0,\infty)$
such that
\[
\prod_{i=N+1}^{\infty}(1+\lambda_{i})<1+\varepsilon_{N}
\]
 for every $N\geq0$; next, we find a decreasing sequence $\left\{ \delta_{N}\right\} _{N\geq0}$,
$\delta_{N}\searrow0$, that satisfies $(\dagger)$ and such that
\[
\frac{1+\delta_{N}}{1-\delta_{N}}\cdot\prod_{i=N+1}^{\infty}(1+\lambda_{i})\leq1+\varepsilon_{N}
\]
for every $N\geq0$. We then apply the renorming procedure described
in this section with these parameters $\left\{ \lambda_{i}\right\} _{i\geq1}$
and $\left\{ \delta_{N}\right\} _{N\geq0}$ and we obtain a $C^{k}$-smooth
norm $\left|\left|\left|\cdot\right|\right|\right|$ on $X$ that
satisfies
\[
(1-\varepsilon_{N})\cdot\left\Vert \cdot\right\Vert \leq\prod_{i=N+1}^{\infty}\left(1+\lambda_{i}\right)^{-1}\cdot\left\Vert \cdot\right\Vert \leq\left|\left|\left|\cdot\right|\right|\right|\leq\frac{1+\delta_{N}}{1-\delta_{N}}\cdot\prod_{i=N+1}^{\infty}\left(1+\lambda_{i}\right)\cdot\left\Vert \cdot\right\Vert \leq(1+\varepsilon_{N})\cdot\left\Vert \cdot\right\Vert
\]
 on $X^{N}$ for every $N\geq0$; since these inequalities are obviously
equivalent to
\[
\Bigl|\,\left|\left|\left|x\right|\right|\right|-\left\Vert x\right\Vert \,\Bigr|\leq\varepsilon_{N}\left\Vert x\right\Vert \qquad\text{for }x\in X^{N},
\]
the proof is complete.
\end{proof}

\section{Final remarks}

In this short section we present some improvements
of our main result in the particular case of polyhedral
Banach spaces. Recall that a finite-dimensional Banach space $X$
is said to be \textit{polyhedral}
if its unit ball is a polyhedron, i.e. finite intersection of closed
half-spaces; an infinite-dimensional Banach space $X$ is \textit{polyhedral}
if its finite-dimensional subspaces are polyhedral. It is proved in
\cite{defoha-polyhedral} that if $X$ is a separable polyhedral Banach
space, then every equivalent norm on $X$ can be approximated (uniformly
on bounded sets) by a polyhedral norm (see Theorem 1.1 in \cite{defoha-polyhedral},
where the approximation is stated in terms of closed, convex and bounded bodies).

In analogy with our main result,
it is natural to ask if this result can be improved in the sense that
the approximation can be chosen to be improving on the tail vectors.
It is not difficult to see that if we replace the $C^{k}$-smooth
norms $\left|\left|\left|\cdot\right|\right|\right|_{(s),n}$ with
polyhedral norms $\left|\left|\left|\cdot\right|\right|\right|_{(p),n}$
(thus using Theorem 1.1 in \cite{defoha-polyhedral}) and we replace
the $C^{\infty}$-smooth functions $\varphi_{n}$ with piecewise linear
ones, the resulting norm $\||\cdot\||$ is still polyhedral. We thus have:

\begin{prop}
Let $X$ be a polyhedral Banach space with a Schauder
basis $\left\{ e_{i}\right\} _{i\geq1}$ and let $\left\Vert \cdot\right\Vert $
be any renorming of $X$. Then for every sequence $\left\{ \varepsilon_{N}\right\} _{N\geq0}$
of positive numbers, there is a polyhedral renorming $\left|\left|\left|\cdot\right|\right|\right|$
of $X$ such that for every $N\geq0$ 
\[
\Bigl|\,\left|\left|\left|x\right|\right|\right|-\left\Vert x\right\Vert \,\Bigr|\leq\varepsilon_{N}\left\Vert x\right\Vert \qquad\text{for }x\in X^{N}.
\]
\end{prop}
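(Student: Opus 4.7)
The plan is to run the construction from Section \ref{sec:Proof of Th} essentially verbatim, only replacing the two ingredients that were responsible for smoothness: the $C^{k}$-smooth approximants $\left|\left|\left|\cdot\right|\right|\right|_{(s),n}$ supplied by \cite{HaTa}, and the convex $C^{\infty}$-smooth functions $\varphi_{n}$. Everything else---the construction of the sequence $\{\left|\left|\left|\cdot\right|\right|\right|_{n}\}_{n\geq 0}$ via Lemma \ref{lem:main lemma} and its corollary, the rescaling in the Definition, the local-finiteness/decay properties collected in Fact \ref{Fact propr of |||.|||}, and the distortion estimates in Lemma \ref{lem:||| is equiv norm}---never uses regularity of any object and should go through without change.

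For the first substitution I would invoke Theorem 1.1 of \cite{defoha-polyhedral} to produce polyhedral norms $\left|\left|\left|\cdot\right|\right|\right|_{(p),n}$ satisfying $\left|\left|\left|\cdot\right|\right|\right|_{n}\leq\left|\left|\left|\cdot\right|\right|\right|_{(p),n}\leq(1+\delta_{n})\left|\left|\left|\cdot\right|\right|\right|_{n}$, that is, exactly the estimate enjoyed by the smooth approximants in the main argument. For the second substitution, in place of $\varphi_{n}$ I would take the piecewise-linear convex function $\varphi_{n}(t):=\max\{0,\,(t-(1-\delta_{n}))/\delta_{n}\}$, which vanishes on $[0,1-\delta_{n}]$, equals $1$ at $t=1$, and is strictly monotone on $[1-\delta_{n},\infty)$---precisely the properties used in Section \ref{sec:Proof of Th}. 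With these replacements define $\Phi$ and $\left|\left|\left|\cdot\right|\right|\right|$ exactly as before. Lemma \ref{lem:||| is equiv norm} and the first half of Lemma \ref{lem:smooth norm} (the assertion that, on a neighborhood of any point of $\{\Phi<2\}$, the function $\Phi$ is a finite sum) only use the comparison relations between the $\left|\left|\left|\cdot\right|\right|\right|_{n}$ and the vanishing of $\varphi_{n}$ on $[0,1-\delta_{n}]$, so they transfer word-for-word.

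The only substantive new step is to verify that the resulting $\left|\left|\left|\cdot\right|\right|\right|$ is polyhedral. Fix any finite-dimensional subspace $F\subseteq X$. Since each $\left|\left|\left|\cdot\right|\right|\right|_{(p),n}$ is a polyhedral norm, its restriction to $F$ is a polyhedral norm on $F$, and composing with the piecewise-linear convex $\varphi_{n}$ yields a piecewise-linear convex function on $F$. By the local-finiteness argument (applied to each point of the unit ball of $\left|\left|\left|\cdot\right|\right|\right|$ in $F$ and then compactly covering that ball), on some open neighborhood $\mathcal{V}\subseteq F$ of $\{\left|\left|\left|x\right|\right|\right|\leq 1\}\cap F$ the function $\Phi|_{F}$ is a finite sum of such pieces, hence is itself piecewise linear and convex on $\mathcal{V}$. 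Consequently $\{\Phi\leq 1\}\cap F$, being a bounded sublevel set of a piecewise-linear convex function, is a polyhedron in $F$; but this set is the unit ball of $\left|\left|\left|\cdot\right|\right|\right||_{F}$, so $\left|\left|\left|\cdot\right|\right|\right||_{F}$ is polyhedral. This replaces the Implicit Function theorem step used in the smooth case.

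Finally, the parameters $\{\lambda_{i}\}$ and $\{\delta_{N}\}$ can be chosen exactly as in the proof of Theorem \ref{thm:Ck norm improving} to translate the estimates of Lemma \ref{lem:||| is equiv norm} into the target inequality on every $X^{N}$. I expect the polyhedrality verification to be the only nonroutine point; the subtlety to watch is that local finiteness of the sum must be made uniform on a neighborhood of the whole unit ball in $F$ (not merely pointwise), which is where finite-dimensionality of $F$ is essential.
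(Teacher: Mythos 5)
Your proposal coincides with the paper's intended argument: the authors indicate exactly this substitution (polyhedral approximants from Theorem~1.1 of \cite{defoha-polyhedral} in place of the smooth ones, piecewise linear $\varphi_n$ in place of the smooth ones) and assert that the resulting Minkowski functional is polyhedral. You have simply spelled out the finite-dimensional verification that the paper leaves as ``not difficult to see,'' and the details you supply (local finiteness on a compact set, sublevel set of a piecewise linear convex function is a polyhedron) are correct.
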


We say that $\|\cdot\|$ \textit{depends
locally on finitely many coordinates} if for each
$x\in S_{X}$ there exists an open neighbourhood $O$ of $x$, a finite
set $\{x_{1}^{*},\dots,x_{k}^{*}\}\subset X^{*}$ and a function $f:\mathbb{R}^{k}\rightarrow\mathbb{R}$
such that $\|y\|=f(x_{1}^{*}(y),\dots,x_{k}^{*}(y))$ for $y\in O$.
It was also shown in \cite{defoha-polyhedral} that if $X$ is a separable
polyhedral space, then every equivalent norm on $X$ can be approximated
by a $C^{\infty}$-smooth norm that depends locally on finitely many
coordinates. By inspection of our argument it follows that if we use
such approximations in our proof, the resulting $C^{\infty}$-smooth
norm $\||\cdot\||$ will also depend locally on finitely many coordinates.
Explicitly, we obtain:

\begin{prop}
Let $X$ be a polyhedral Banach space with a Schauder
basis $\left\{ e_{i}\right\} _{i\geq1}$ and let $\left\Vert \cdot\right\Vert $
be any renorming of $X$. Then for every sequence $\left\{ \varepsilon_{N}\right\} _{N\geq0}$
of positive numbers, there is a $C^{\infty}$-smooth renorming $\left|\left|\left|\cdot\right|\right|\right|$
of $X$ that locally depends on finitely many coordinates and such
that for every $N\geq0$ 
\[
\Bigl|\,\left|\left|\left|x\right|\right|\right|-\left\Vert x\right\Vert \,\Bigr|\leq\varepsilon_{N}\left\Vert x\right\Vert \qquad\text{for }x\in X^{N}.
\]
\end{prop}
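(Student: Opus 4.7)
The plan is to repeat verbatim the construction used to prove Theorem \ref{thm:Ck norm improving}, with the only change being the source of the smoothing step. Namely, choose the parameters $\{\lambda_i\}_{i\geq 1}$ and $\{\delta_N\}_{N\geq 0}$ as in the proof of Theorem \ref{thm:Ck norm improving}, perform the iterative renorming to obtain the sequence of norms $\{\left|\left|\left|\cdot\right|\right|\right|_n\}_{n\geq 0}$, and then, instead of invoking \cite{HaTa}, invoke the result of \cite{defoha-polyhedral} to produce, for each $n$, a $C^{\infty}$-smooth norm $\left|\left|\left|\cdot\right|\right|\right|_{(s),n}$ that depends locally on finitely many coordinates and satisfies
\[
\left|\left|\left|\cdot\right|\right|\right|_n\leq \left|\left|\left|\cdot\right|\right|\right|_{(s),n}\leq (1+\delta_n)\left|\left|\left|\cdot\right|\right|\right|_n.
\]
Define $\Phi$ and $\left|\left|\left|\cdot\right|\right|\right|$ exactly as before. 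The distortion estimates of Lemma \ref{lem:||| is equiv norm} and the $C^\infty$-smoothness argument of Lemma \ref{lem:smooth norm} carry over word for word, so it remains only to check that $\left|\left|\left|\cdot\right|\right|\right|$ depends locally on finitely many coordinates.

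For this, recall from the proof of Lemma \ref{lem:smooth norm} that on a neighbourhood $\mathcal{U}$ of any fixed $x\ne 0$ one has the finite representation
\[
\Phi=\sum_{n=0}^{n_{0}+2}\varphi_{n}\bigl(\left|\left|\left|\cdot\right|\right|\right|_{(s),n}\bigr).
\]
Since each summand $\left|\left|\left|\cdot\right|\right|\right|_{(s),n}$ depends locally on finitely many coordinates, by shrinking $\mathcal{U}$ we may find, for every $n\in\{0,\dots,n_0+2\}$, a finite set $\mathcal{F}_n\subset X^*$ and a continuous function $f_n$ with $\left|\left|\left|y\right|\right|\right|_{(s),n}=f_n\bigl((x^*(y))_{x^*\in\mathcal{F}_n}\bigr)$ for $y\in\mathcal{U}$. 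Setting $\mathcal{F}:=\bigcup_{n\leq n_0+2}\mathcal{F}_n$, the function $\Phi$ depends on $\mathcal{U}$ only on the finitely many coordinates $\{x^*:x^*\in\mathcal{F}\}$.

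Finally, the implicit-function part of Lemma \ref{lem:smooth norm} shows that on the open set $V$ the $C^\infty$-smooth function $\Psi(y,\rho):=\Phi(\rho^{-1}y)$ determines $\left|\left|\left|y\right|\right|\right|$ implicitly via $\Psi(y,\left|\left|\left|y\right|\right|\right|)=1$. Near $(x,\left|\left|\left|x\right|\right|\right|)$, we have $\Psi(y,\rho)=\sum_{n=0}^{n_0+2}\varphi_n\bigl(f_n((\rho^{-1}x^*(y))_{x^*\in\mathcal{F}_n})\bigr)$, so $\Psi$ itself depends, in its $y$-variable, only on the coordinates in $\mathcal{F}$. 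Consequently the implicit solution $\rho(y)=\left|\left|\left|y\right|\right|\right|$ is a function of these same finitely many coordinates on a neighbourhood of $x$, which proves that $\left|\left|\left|\cdot\right|\right|\right|$ depends locally on finitely many coordinates. The main (and in fact only) subtlety is this bookkeeping on the coordinate sets; everything else is a direct transcription of the earlier proof.
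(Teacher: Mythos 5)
Your proof is correct and follows the same strategy the paper uses: replace the $C^{k}$-smoothing step (via \cite{HaTa}) by the $C^{\infty}$/LFC approximation from \cite{defoha-polyhedral}, keep the rest of the construction, and observe that the implicit-function argument transports the LFC property from $\Psi$ to the final norm. The paper itself only asserts this ``by inspection of our argument''; your explicit bookkeeping of the finite coordinate sets $\mathcal{F}_{n}$ and the resulting local representation of $\Psi$ is exactly the inspection the paper has in mind and is carried out correctly.
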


In conclusion of our note, we mention that we do
not know whether our main result can be generalized replacing Schauder
basis with Markushevich basis. The argument presented here is not
directly applicable, since, for example, we have made use of the canonical
projections on the basis and their uniform boundedness.

\medskip{}

\textbf{Acknowledgments.} The authors wish to thank the referee for a
careful reading of our manuscript and for pointing out to us the above question.

\end{document}